\newtheorem{theorem}{Theorem}[section]
\newtheorem{lemma}[theorem]{Lemma}
\newtheorem{definition}[theorem]{Definition}
\newtheorem*{defn*}{Definition}
\newtheorem{claim}[theorem]{Claim}
\newtheorem{observation}[theorem]{Observation}
\newtheorem*{observation*}{Observation}
\newtheorem{question}[theorem]{Question}
\newtheorem*{question*}{Question}
\newenvironment{definition*}
  {
   \innerdefinition}
  {\endinnerdefinition}
\newcommand{\ex}{\mathrm{ex}}
\tikzstyle{aNode} = [circle, fill = black]
\tikzstyle{bNode} = [circle,draw = black, thick]
\newcommand{\ppoints}[1]{%
\begin{tikzpicture}[inner sep = 0.7pt, #1]%
\node (1) at (0,-2) [aNode]{};
\node (3) at (1.5,-2) [aNode]{};
\node (2) at (0.75,-1) [aNode]{};
\end{tikzpicture}%
}
\def\points{\ppoints{scale=0.08}}
\def\co{\mathrm{co}}
\title{Vanishing codegree Tur\'{a}n density implies vanishing uniform Tur\'{a}n density}
\author{Laihao Ding\thanks{School of Mathematics and Statistics, and Key Laboratory of Nonlinear Analysis \& Applications (Ministry of Education), Central China Normal University, Wuhan 430079, China and Extremal Combinatorics and Probability Group (ECOPRO),  Institute for Basic Science (IBS), Daejeon, South Korea. Supported by the National Nature Science Foundation of China (11901226), the China Scholarship Council and the Institute for Basic Science (IBS-R029-C4). \textbf{Email address}: dinglaihao@ccnu.edu.cn}
\and Hong Liu\thanks{Extremal Combinatorics and Probability Group (ECOPRO),  Institute for Basic Science (IBS), Daejeon, South Korea. Supported by the Institute for Basic Science (IBS-R029-C4). \textbf{Email address}: hongliu@ibs.re.kr}
\and Shuaichao Wang\thanks{Center for Combinatorics and LPMC, Nankai University, Tianjin 300071, China and Extremal Combinatorics and Probability Group (ECOPRO),  Institute for Basic Science (IBS), Daejeon, South Korea. Supported by the China Scholarship Council and Institute for Basic Science (IBS-R029-C4). \textbf{Email address}: wsc17746316863@163.com } 
\and Haotian Yang\thanks{Taishan College, Shandong University, Jinan 250100, China and Extremal Combinatorics and Probability Group (ECOPRO),  Institute for Basic Science (IBS), Daejeon, South Korea. Supported by Seed Fund Program for International Research Cooperation of Shandong University and Institute for Basic Science (IBS-R029-C4). \textbf{Email address}: 202017091012@mail.sdu.edu.cn }}
\begin{document}
\maketitle

\begin{abstract}
 For a $k$-uniform hypergraph (or simply $k$-graph) $F$, the codegree Tur\'{a}n density $\pi_{\co}(F)$ is the infimum over all $\alpha$ such that any $n$-vertex $k$-graph $H$ with every $(k-1)$-subset of $V(H)$ contained in at least $\alpha n$ edges has a copy of $F$. The uniform Tur\'{a}n density $\pi_{\points}(F)$ is the supremum over all $d$ such that there are infinitely many $F$-free $k$-graphs $H$ satisfying that any linear-size subhypergraph of $H$ has edge density at least $d$. Falgas-Ravry, Pikhurko, Vaughan and Volec [J. London Math. Soc., 2023] asked whether for every $3$-graph $F$, $\pi_{\points}(F)\leq\pi_{\mathrm{co}}(F)$. We provide a positive answer to this question provided that $\pi_{\mathrm{co}}(F)=0$. Our proof relies on a random geometric construction and a new formulation of the characterization of $3$-graphs with vanishing uniform Tur\'{a}n density due to Reiher, R{\"o}dl and Schacht [J. London Math. Soc., 2018]. Along the way, we answer a question of Falgas-Ravry, Pikhurko, Vaughan and Volec about subhypergraphs with linear minimum codegree in uniformly dense hypergraphs in the negative.
\end{abstract}
\bigskip

\section{Introduction}
Given a $k$-uniform hypergraph $F$ (or simply \emph{$k$-graph}), one of the central problems in extremal combinatorics is to determine the maximum number of edges in an $n$-vertex $k$-graph $H$ containing no copy of $F$, which is called the \emph{Tur\'{a}n number} of $F$ and is denoted by ${\rm ex}(n,F)$. Targeting on the limit behavior, one may define the \emph{Tur\'{a}n density} 
$$\pi(F):=\lim\limits_{n\to \infty}\frac{\ex(n,F)}{\binom{n}{k}}.$$
An averaging argument shows that this limit always exists.
While Tur\'{a}n densities are well-understood for graphs (i.e., $2$-graphs), determining the Tur\'{a}n density of a $k$-graph becomes notoriously difficult when $k\geq 3$. Despite much efforts and attempts, even the Tur\'{a}n densities of the $3$-graphs on four vertices with three and four edges, denoted by $K_4^{(3)-}$ and $K_4^{(3)}$ respectively, are still unknown.

Erd\H{o}s and S\'{o}s \cite{erdossos} considered a variation requiring the host hypergraph to be uniformly dense, formally defined as follows. For real numbers $d\in[0,1]$ and $\eta>0$, an $n$-vertex $k$-graph $H$ is said to be \emph{uniformly $(d,\eta)$-dense} if for all $U\subseteq V(H)$, it holds that $\left| \binom{U}{k}\cap E(H)\right|\geq d\binom{|U|}{k}-\eta n^k.$
Given a $k$-graph $F$, the \emph{uniform Tur\'{a}n density} of $F$ is defined as 
\begin{align*}\label{dot-turan-dense}
\pi_{\points}(F) = \sup \{ d\in [0,1] & : \text{for\ every\ } \eta>0 \ \text{and\ } n_0\in \mathbb{N},\ \text{there\ exists\ an\ } F \text{-free}, \\
&\quad \text{uniformly}\ (d,\eta) \text{-dense}\ k \text{-graph\ } H\ \text{with\ } |V(H)|\geq n_0 \}.
\end{align*}
Erd\H{o}s and S\'{o}s conjectured that $\pi_{\points}(K_4^{(3)-})=\frac{1}{4}$, based on the following construction of Erd\H{o}s and Hajnal \cite{erdosconstruction}. Take a tournament $T$ on a large vertex set $V$ uniformly at random, define a $3$-graph $H(T)$ on the same vertex set in which each edge corresponds to a cyclic triangle in $T$. One can easily check that $H(T)$ contains no copy of $K_4^{(3)-}$ and for every $\eta>0$ the $3$-graph $H(T)$ is uniformly $(\frac{1}{4},\eta)$-dense with high probability. This conjecture was recently confirmed by Glebov, Kr\'{a}l' and Volec \cite{K43-dan} using flag algebra, and Reiher, R\"{o}dl and Schacht \cite{K43-rodl} later gave an alternative proof via hypergraph regularity method. For $K_4^{(3)}$, a construction due to R\"{o}dl \cite{K43rodl} shows that $\pi_{\points}(K_4^{(3)})\geq\frac{1}{2}$, but whether $\frac{1}{2}$ is the correct value of $\pi_{\points}(K_4^{(3)})$ still remains open. Reiher, R\"{o}dl and Schacht \cite{reiher2018hypergraphs} characterized all $3$-graphs with vanishing uniform Tur\'{a}n density, and provided a construction showing that the uniform Tur\'{a}n density cannot lie in the interval $(0,\frac{1}{27})$. Later, Garbe, Kr\'{a}l' and Lamaison \cite{1/27dan} found a specific family of $3$-graphs $F$ with $\pi_{\points}(F)=\frac{1}{27}$. Recently, the uniform Tur\'{a}n density of the $3$-uniform tight cycle $C_{\ell}^{(3)}$ on $\ell$ vertices with $\ell\geq5$ was determined by Buci\'{c}, Cooper, Kr\'{a}l', Mohr and Correia \cite{cycledan}. For more results and problems on this topic and other variants, we refer the readers to~\cite{zhou1,zhou2,reihersurvey,K43dotedge,reiher2018hypergraphs,manteluniform,K43-rodl}.

In this paper, we consider the following codegree variation. Given a $k$-graph $H$ and a subset $S$ of $V(H)$ with $1\leq|S|\leq k-1$, define the \emph{degree} of $S$ in $H$, denoted by $d_{H}(S)$, as the number of edges containing $S$, i.e., $d_{H}(S)=|\{e\in E(H)~:~S\subseteq e\}|$. The \emph{minimum codegree} $\delta_{\co}(H)$ of $H$ is the minimum of $d_{H}(S)$ over all $(k-1)$-subsets $S$ of $V(H)$. Given a $k$-graph $F$ and $n\in \mathbb{N}$, the \textit{codegree Tur\'{a}n number} $\ex_{\co}(n,F)$ is the maximum $\delta_{\co}(H)$ an $n$-vertex $F$-free $k$-graph $H$ can admit. Similarly, the \textit{codegree Tur\'{a}n density} $\pi_{\co}(F)$ of $F$ is defined as 
$$\pi_{\co}(F)=\lim\limits_{n\to \infty}\frac{\ex_{\co}(n,F)}{n}.$$ 
This limit always exists, and it is not hard to see that $\pi_{\co}(F)\leq\pi(F)$ for any $k$-graph $F$. In particular, $\pi_{\co}(F)=\pi(F)$ when $F$ is a graph.

Determining the codegree Tur\'{a}n density of a $k$-graph with $k\geq3$ seems also very difficult in general. In the late 1990s, Nagle \cite{K43-nagle} and Czygrinow and Nagle \cite{K43nagle} conjectured that $\pi_{\co}(K_4^{(3)-})=\frac{1}{4}$ and $\pi_{\co}(K_4^{(3)})=\frac{1}{2}$, respectively. Recently, the $\pi_{\co}(K_4^{(3)-})$ case was settled by Falgas-Ravry, Pikhurko, Vaughan and Volec \cite{K43-codegree} in a stronger form via flag algebra technique. There are few other 3-graphs whose codegree Tu\'ran degree are known: Fano plan by Mubayi \cite{mubayi2005co}, $F_{3,2}$ with $V(F_{3,2})=[5]$ and $E(F_{3,2})=\{123,124,125,345\}$ by Falgas-Ravry, Marchant, Pikhurko and Vaughan \cite{falgas2015codegree}, and $C_{\ell}^{(3)-}$, the tight cycle with one edge removed,
by Piga, Sales and Sch\"{u}lke \cite{piga2022codegree}.

For the usual Tur\'{a}n density and the uniform Tur\'{a}n density for $3$-graphs, zero is a jump, i.e.~not an accumulation point. In constrast, recent work of Piga and Sch\"{u}lke \cite{piga2023hypergraphs} showed that surprisingly the codegree Tur\'{a}n density can be arbitrarily close to zero.

We study the relationship between the codegree Tur\'{a}n density of a $3$-graph and its uniform Tur\'{a}n density. Recall that $\pi_{\mathrm{co}}(K_4^{(3)-})=\pi_{\points}(K_4^{(3)-})=\frac{1}{4}$ and it is conjectured that $\pi_{\mathrm{co}}(K_4^{(3)})=\pi_{\points}(K_4^{(3)})=\frac{1}{2}$ with the same extremal construction. Falgas-Ravry, Pikhurko, Vaughan and Volec \cite{K43-codegree} asked the following question.

\begin{question*}[\cite{K43-codegree}]\label{ques}
For any $3$-graph $F$, is it true that $\pi_{\points}(F)\leq\pi_{\mathrm{co}}(F) ?$
\end{question*}

Falgas-Ravry and Lo \cite{loanswer} provided a positive answer to this question under a stronger uniform denseness assumption. 

In this paper, we give a positive answer to the above question when $\pi_{\mathrm{co}}(F)=0$.  

\begin{theorem}\label{main}
    Let $F$ be a $3$-graph. If $\pi_{\mathrm{co}}(F)=0$, then $\pi_{\points}(F)=0$.
\end{theorem}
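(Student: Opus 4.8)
The plan is to establish the contrapositive flavour of the statement by showing that if $\pi_{\points}(F)>0$ then $\pi_{\co}(F)>0$; equivalently, we build, from a $3$-graph $F$ with vanishing codegree Tur\'an density, a family of $F$-free host hypergraphs that are \emph{not} uniformly dense at any positive level. The two main ingredients are flagged in the abstract: first, a new combinatorial reformulation of the Reiher--R\"odl--Schacht characterization of $3$-graphs with $\pi_{\points}(F)=0$, and second, a random geometric construction that realizes this characterization. Concretely, I would first recast the characterization (phrased originally via reduced hypergraphs and ``palettes'') into a statement of the form: $\pi_{\points}(F)=0$ if and only if $F$ admits a certain labeling/ordering of its vertex set together with an assignment of its edges to one of a bounded number of ``types'' compatible along the order, so that $F$ embeds into every sufficiently structured host. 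The point of the reformulation is to make it checkable against an explicit construction.

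The core of the argument is the random geometric construction. The idea is: to each vertex of the host hypergraph $H$ on vertex set $[n]$, assign an independent uniformly random point in $[0,1]^t$ (for a suitable fixed dimension $t$ depending on $F$), and put a triple $\{x,y,z\}$ with $x<y<z$ into $E(H)$ according to a rule depending on the relative positions of the three coordinates vectors — for instance, whether the point of $y$ lies in a prescribed region determined by the points of $x$ and $z$. One then checks two things. (i) \textbf{Codegree lower bound:} every pair $\{u,v\}$ lies in a linear number $\Omega(n)$ of triples, because for a random third vertex $w$ the geometric event defining an edge has probability bounded away from $0$; hence $\delta_{\co}(H)\ge cn$ for some $c=c(F)>0$, and a standard concentration/alteration step makes this hold deterministically after deleting few vertices. (ii) \textbf{$F$-freeness:} if $F$ embedded in $H$, the vertices of the copy would have to satisfy a rigid set of geometric constraints; using the reformulated characterization, the fact that $\pi_{\points}(F)=0$ \emph{fails} for this construction (equivalently, the construction is a witness of positive uniform Tur\'an density) should be exactly the statement that no such copy exists. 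Since $\pi_{\co}(F)=0$ forces $F$-free hypergraphs of arbitrarily small positive codegree density to exist only if the uniform construction collapses, combining (i) and (ii) yields $\pi_{\co}(F)\ge c>0$, a contradiction — so $\pi_{\co}(F)=0$ implies no such positive-density construction exists, i.e. $\pi_{\points}(F)=0$.

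In more detail on the logical skeleton: assume for contradiction $\pi_{\points}(F)=d>0$. By the RRS characterization (in the reformulated guise), $F$ must fail to have the structural property $\mathcal{P}$ that characterizes $\pi_{\points}=0$. I would then argue that failure of $\mathcal{P}$ lets one design the geometric edge-rule above so that the resulting random $H$ is simultaneously (a) $F$-free with positive probability — the rigidity of $\mathcal{P}$-violation obstructs any embedding — and (b) has minimum codegree at least $cn$ for an absolute constant $c>0$ independent of $n$. Taking $n\to\infty$ shows $\ex_{\co}(n,F)\ge cn$, whence $\pi_{\co}(F)\ge c>0$, contradicting the hypothesis. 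The same construction, by being uniformly dense in the weaker sense while having linear codegree nowhere near linear in the ``right'' places, is also what powers the promised negative answer to the Falgas-Ravry--Pikhurko--Vaughan--Volec question about subhypergraphs with linear minimum codegree.

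I expect the main obstacle to be part (ii), the $F$-freeness: one must show that \emph{every} $3$-graph $F$ with $\pi_{\points}(F)>0$ — not just specific examples like $K_4^{(3)-}$ — fails to embed in the tailored geometric construction. This requires the reformulation of the RRS characterization to be genuinely operational: it must pin down precisely which local/ordered patterns a $\pi_{\points}=0$ graph is forced to contain, so that \emph{avoiding} all of them in the construction is possible exactly when $\pi_{\points}(F)>0$. Getting the dimension $t$, the geometric region, and the vertex ordering to line up with the combinatorial types in the characterization — and verifying that a would-be embedding of $F$ would force an impossible geometric configuration — is the delicate heart of the proof; the codegree bound in (i) should by contrast be a routine second-moment / union-bound estimate followed by deletion of a $o(n)$ set of bad vertices.
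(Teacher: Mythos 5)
Your overall skeleton --- prove the contrapositive by exhibiting, for each $F$ with $\pi_{\points}(F)>0$, an $F$-free host with linear minimum codegree built from a random geometric construction plus a reworking of the Reiher--R\"odl--Schacht characterization --- is the right one and matches the paper's strategy. But as written your plan contains a genuine gap, and it is precisely the obstruction the paper isolates in \cref{sec:fail}. You propose to design the geometric edge rule so that a copy of $F$ would violate the RRS structural property $(\spadesuit)$ directly, while simultaneously every pair has $\Omega(n)$ codegree "because the geometric event has probability bounded away from $0$." These two goals are incompatible in the naive form: any $3$-graph that globally satisfies $(\spadesuit)$ has a pair of codegree at most $1$ in every subhypergraph (if $v_1v_2v_i$ is an edge, the colouring forces $\phi(v_2v_i)=$ green, so $d(\{v_2,v_i\})=1$). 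So a construction whose $F$-freeness is certified by a single global ordering/colouring cannot have linear minimum codegree, and your step (i) and step (ii) cannot both hold for the construction you describe. This is also why the paper's answer to the Falgas-Ravry--Pikhurko--Vaughan--Volec subhypergraph question is negative, not (as you suggest) a byproduct of the construction being "uniformly dense in the weaker sense."

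The missing ideas are the ones that let $(\spadesuit)$ hold only \emph{locally}. First, the characterization is reformulated as: $\pi_{\points}(F)>0$ iff under every labeling some link graph contains a monotone $P_3$ (\cref{permutation}), with a bipartite strengthening for $(2,1)$-type $3$-graphs (\cref{strengthen}). Second, one reduces to a subhypergraph $F'$ whose auxiliary graph $\Gamma_{F'}$ is connected (\cref{connected}); this connectedness is what forces any embedded copy to \emph{cluster}. Third, the geometric construction is circular rather than ordered: vertices sit on a circle, each vertex's potential neighbours in each link graph are confined to a short random arc, so there is no global order, but the clustering claim (\cref{cl:cluster}) shows any copy of the connected $F'$ lands in an open half circle, where a genuine linear order does exist and the monotone-$P_3$ criterion yields a contradiction. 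Finally, the bipartite piece alone leaves pairs inside $B$ with codegree $0$, so three copies must be glued cyclically on a tripartition $A\cup B\cup C$, and \cref{3parts} is needed to push positivity of $\pi_{\points}$ down to one of the three bipartite pieces. Without the connectedness reduction, the clustering argument, and the cyclic gluing, the proof does not close.
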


Our result implies the following relationships among the three densities we discussed:
$$\{F:3\text{-graph with}~\pi(F)=0\}\subseteq\{F:3\text{-graph with}~\pi_{\mathrm{co}}(F)=0\}\subseteq\{F:3\text{-graph with}~\pi_{\points}(F)=0\}.$$

\begin{figure}[!ht]
\centering
\begin{tikzpicture}[scale=0.8]
\draw[fill=pink,fill opacity=0.025](-4,-0.5) rectangle (4,2.5) ;
\draw[fill=yellow,fill opacity=0.05](0,0.6) ellipse(3 and 1);
\draw[fill=lime,fill opacity=0.1](0,0.25) ellipse(1.7 and 0.5);
\node[inner sep= 1.3pt,black](u) at (0,0.32) {$\pi(F)=0$};
\node[inner sep= 1.3pt,black](u) at (0,1.2) {$\pi_{\mathrm{co}}(F)=0$};
\node[inner sep= 1.3pt,black](u) at (0,2) {$\pi_{\points}(F)=0$};
\end{tikzpicture}
\end{figure}

\noindent{\bf Our approach.~} A natural approach is to utilize a vanishing unform Tur\'an condition $(\spadesuit)$ by Reiher, R{\"o}dl and Schacht \cite{reiher2018hypergraphs} (see~\cref{rodl}), and look for hypergraphs satisfying $(\spadesuit)$ and linear minimum codegree. This, however, fails to work as there are uniformly dense hypergraphs not containing any subhypergraph with linear minimum codegree, see the discussion in~\cref{sec:fail}. Instead of working with $(\spadesuit)$, we observe a reformulation of $(\spadesuit)$ using monotone paths in the link graphs (\cref{permutation}) and a variant for `bipartite' 3-graphs (\cref{strengthen}). A key step in our proof is to show that we may further assume that the forbidden `bipartite' 3-graph $F$ possesses certain connectedness condition (\cref{connected}), which allows us to force a clustering phenomenon (\cref{cl:cluster}) in our random geometric construction to avoid $F$. We then link this graph cyclically to obtain the final construction.


\medskip

\noindent{\bf Notations.~}Let $F$ be a 3-graph. For a vertex $v\in V(F)$, its \emph{link graph} in $F$, denoted by $L_F(v)$, is the graph with $V(L_F(v))=V(F)$ and $E(L_F(v))=\{uw~:~uvw\in E(F)\}$. The \emph{shadow graph} of $F$, denoted by $\partial F$, is the graph with $V(\partial F)=V(F)$ and $E(\partial F)=\{uw~:~uvw\in E(F)\}$. Let $u,v$ be two vertices of the $3$-graph $F$. We say that $w\in V(F)$ is a \emph{coneighbor} of $u,v$ if $uvw\in E(F)$. The \emph{coneighbor set} of $u,v$ is defined as $N_{F}(uv)=\{w~:~uvw\in E(F)\}$. 

Let $S$ be a finite set, we say that $\sigma$ is a \emph{labeling} of $S$ if $\sigma:S\rightarrow [|S|]$ is a bijection. Let $G$ be a graph and $\sigma$ be a labeling of $V(G)$. Let $u,v,w\in V(G)$ and $uvw$ form a path of length two in $G$. We say that $uvw$ is a \emph{monotone $P_3$} if $\sigma(u)<\sigma(v)<\sigma(w)$ or $\sigma(u)>\sigma(v)>\sigma(w)$. The following operation provides us a natural way to merge several labelings into a larger one. Let $S_1,S_2$ be two disjoint finite sets and let $\sigma_1,\sigma_2$ be two labelings of $S_1,S_2$ respectively. The sum of $\sigma_1$ and $\sigma_2$, denoted by $\sigma_1 \oplus \sigma_2$, is a labeling of $S_1\cup S_2$ where
\[\sigma_1 \oplus \sigma_2(s)= \left \{
\begin{array}{ll}
 \sigma_1(s),  & s\in S_1;\\
 \sigma_2(s)+|S_1|, & s\in S_2.
\end{array}
\right.\]
For more than two labelings $\sigma_1,\sigma_2,\ldots,\sigma_k$, the sum of them, denoted by $\sum_{i=1}^{k}\sigma_i$, is inductively defined by 
\[\sum_{i=1}^{k}\sigma_i=\left(\sum_{i=1}^{k-1}\sigma_i\right)\oplus \sigma_k.\]
Let $S$ be a finite set and $\sigma':S\rightarrow \mathbb{Z}$ be an injection. We say that a labeling $\sigma$ of $S$ is \emph{induced by} $\sigma'$ if for every $s_1,s_2\in S$, $\sigma(s_1)>\sigma(s_2)$ if and only if $\sigma'(s_1)>\sigma'(s_2)$. Obviously, $\sigma$ exists and is unique.

\section{Auxiliary results}

\subsection{A first attempt}\label{sec:fail}

 To prove Theorem \ref{main}, a natural idea is to utilize the following characterization of $3$-graphs with vanishing uniform Tur\'{a}n density due to Reiher, R{\"o}dl and Schacht \cite{reiher2018hypergraphs}. 

\begin{theorem}[Reiher, R{\"o}dl and Schacht, \cite{reiher2018hypergraphs}]\label{rodl}
For any $3$-graph $F$, the following are equivalent.
\begin{itemize}
    \item[$(\clubsuit)$] $\pi_{\points}(F)=0$;
    \item[$(\spadesuit)$] there is an enumeration of the vertex set $V(F)=\{v_1,...,v_f\} $ and there is a $3$-colouring $\phi: \partial F \rightarrow \{\emph {red,blue,green}\}$ of the pairs of vertices covered by edges of $F$ such that every edge $\{v_i,v_j,v_k\}\in E(F)$ with $i<j<k$ satisfies
$$\phi(v_i v_j ) = \emph{red},~\phi(v_i v_k) = \emph{blue},~\phi(v_j v_k) = \emph{green}.$$
\end{itemize}
\end{theorem}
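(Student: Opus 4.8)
The statement is the equivalence of $(\clubsuit)$ and $(\spadesuit)$, and I would prove the two implications by quite different routes. The direction ``$(\spadesuit)$ fails $\Rightarrow\pi_{\points}(F)>0$'' is elementary and rests on an explicit random construction, whereas ``$(\spadesuit)\Rightarrow\pi_{\points}(F)=0$'' needs the hypergraph regularity method, and is where I expect all of the difficulty to lie.

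\medskip\noindent\textbf{The construction direction.} Suppose $(\spadesuit)$ fails. For each $n$, put $V=[n]$ with its usual order and colour the pairs in $\binom{[n]}{2}$ independently and uniformly at random with one of $\{\mathrm{red},\mathrm{blue},\mathrm{green}\}$; declare $\{i,j,k\}$ with $i<j<k$ to be an edge of $H$ exactly when $ij$ is red, $ik$ is blue and $jk$ is green. If some injection $\iota\colon V(F)\to[n]$ embeds $F$ into $H$, then enumerating $V(F)=\{v_1,\dots,v_f\}$ so that $\iota(v_1)<\dots<\iota(v_f)$ and setting $\phi(v_pv_q)$ to be the colour of $\iota(v_p)\iota(v_q)$ on $\partial F$ witnesses $(\spadesuit)$; hence under $\neg(\spadesuit)$ this $H$ is $F$-free, for every outcome of the colouring. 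On the other hand, each triple is an edge with probability exactly $\tfrac1{27}$, and for a fixed $U\subseteq V$ the quantity $|\binom U3\cap E(H)|$ is a function of the $\binom n2$ independent colours in which altering one colour changes the value by at most $n$. The bounded differences inequality then bounds the probability of an $\eta n^3$-deviation from its mean $\tfrac1{27}\binom{|U|}{3}$ by $2\exp(-\Omega(\eta^2 n^2))$, which comfortably survives the union bound over the $2^n$ choices of $U$. So for each fixed $\eta>0$ and all large $n$ there is, with positive probability, an $H$ that is simultaneously $F$-free and uniformly $(\tfrac1{27},\eta)$-dense, giving $\pi_{\points}(F)\ge\tfrac1{27}>0$.

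\medskip\noindent\textbf{The regularity direction.} Now assume $(\spadesuit)$ and let $H$ be uniformly $(d,\eta)$-dense with $d>0$ fixed, $\eta$ small and $|V(H)|$ large; we must produce a copy of $F$. I would run the hypergraph regularity method through the language of \emph{reduced hypergraphs}: an ordered index set $[t]$, a nonempty constituent set $\mathcal P^{ij}$ for every pair $ij$, and a $3$-partite $3$-graph $\mathcal A^{ijk}\subseteq\mathcal P^{ij}\times\mathcal P^{ik}\times\mathcal P^{jk}$ for every $i<j<k$, with $\mathcal A$ called $d$-dense if each $\mathcal A^{ijk}$ contains at least a $d$-fraction of all triads. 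Two standard steps frame the problem: applying the regularity lemma to $H$ and exploiting its uniform denseness yields, by a routine averaging argument, a $(d-o(1))$-dense reduced hypergraph whose constituents are the $\varepsilon$-regular bipartite cells between the clusters; and the counting lemma turns any copy of $F$ in this reduced hypergraph --- an injection $\psi\colon V(F)\to[t]$ together with a constituent $P_{pq}\in\mathcal P^{\psi(v_p)\psi(v_q)}$ for every covered pair, such that each edge $v_pv_qv_r$, read in the order induced by $\psi$, occupies a triad of $\mathcal A$ --- into a genuine copy of $F$ in $H$. Thus it suffices to show: \emph{if $F$ satisfies $(\spadesuit)$, then $F$ embeds into every $d$-dense reduced hypergraph with sufficiently many index classes}. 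The point of $(\spadesuit)$ here is that one sends $v_1,\dots,v_f$ to an increasing sequence of indices, and then the fixed palette $\phi$ guarantees that each covered pair always occupies the same coordinate of a triad --- red, blue, green being the first, second, third coordinate --- across all edges through it.

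\medskip\noindent\textbf{The embedding lemma and the main obstacle.} I would prove this combinatorial lemma by induction on $|V(F)|$, introducing the vertices in the $(\spadesuit)$-order $v_1,\dots,v_f$ and sending $v_k$ to a newly chosen index class. Adding $v_k$ forces one to pick, simultaneously, a constituent for every covered pair $v_jv_k$ with $j<k$, subject to the constraints $(P_{ij},P_{ik},P_{jk})\in\mathcal A$ coming from the edges $v_iv_jv_k$ with $i<j<k$ (where $P_{ij}$ is already fixed). The main obstacle is precisely the accumulation of these constraints: a single new constituent --- say for a ``green'' pair --- may have to lie in an intersection of up to $f$ sets each of which $d$-density only guarantees to have relative size $d$, and such an intersection can be empty. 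The resolution is to use that the reduced hypergraph has very many index classes: instead of committing to fixed target classes, one chooses the classes that host the embedding (the one for $v_k$, and on occasion some of the earlier ones as well) by a pigeonhole/Ramsey argument, so that the pattern of extendable constituents becomes homogeneous, and one carries through the induction a suitably strengthened hypothesis recording enough ``spare'' constituents at each stage. Identifying the right strengthening, and the right order in which to commit to classes and constituents, is in my view the heart of the matter; the surrounding regularity bookkeeping (choice of partition parameters, verification of the counting lemma hypotheses) is lengthy but routine.
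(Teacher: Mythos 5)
This statement is quoted by the paper from Reiher, R\"odl and Schacht \cite{reiher2018hypergraphs} and is not proved here, so there is no in-paper argument to compare against; I am judging your attempt on its own.

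Your construction direction ($\neg(\spadesuit)\Rightarrow\pi_{\points}(F)>0$) is correct and complete in all essentials: the random three-colouring of $\binom{[n]}{2}$ with edges being the red--blue--green ordered triples is exactly the Reiher--R\"odl--Schacht construction, any embedding of $F$ would transport the colouring back to a witness of $(\spadesuit)$, and the bounded-differences concentration (changing one pair-colour moves the count by at most $n$, so the deviation probability $2\exp(-\Omega(\eta^2n^2))$ beats the union bound over $2^n$ sets $U$) gives uniform $(\tfrac1{27},\eta)$-density. The regularity direction, however, is only an outline with the decisive step missing. You correctly reduce to the claim that a $3$-graph satisfying $(\spadesuit)$ embeds into every $d$-dense reduced hypergraph on sufficiently many index classes, and you correctly locate the obstacle: when $v_k$ is introduced, the constituent chosen for a pair $v_jv_k$ must lie in the intersection of the ``extendable'' sets imposed by all edges $v_iv_jv_k$ with $i<j$, and $d$-density of each $\mathcal A^{ijk}$ alone does not keep such an intersection nonempty. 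But you then write that the resolution is ``a suitably strengthened hypothesis recording enough spare constituents'' whose identification ``is the heart of the matter'' --- i.e.\ you have named the heart of the proof rather than supplied it. Without the precise strengthened induction statement (which sets of candidate constituents are carried along, with what lower bounds on their sizes, and in what order indices versus constituents are committed), the argument cannot be checked and does not constitute a proof. As it stands, the hard implication $(\spadesuit)\Rightarrow\pi_{\points}(F)=0$ is asserted with a plausible strategy but a genuine gap at exactly the point where all the work lies.
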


Fix an arbitrary $3$-graph $F$ with $\pi_{\co}(F)=0$. Suppose for some $\varepsilon>0$ and infinitely many integers $n$, one can construct $n$-vertex $3$-graphs $H_n$ satisfying~$(\spadesuit)$ and $\delta_{\co}(H_n)\geq\varepsilon n$. Then as $\pi_{\co}(F)=0$, $F$ must be a subhypergraph of $H_n$ for some sufficiently large $n$. Inheriting from $H_n$, $F$ also satisfies condition $(\spadesuit)$, implying that $\pi_{\points}(F)=0$ by Theorem \ref{rodl} as desired. Unfortunately, we note that this idea is not feasible by the following simple observation.  

 \begin{observation}
Let $H$ be any $3$-graph satisfying condition $(\spadesuit)$ in Theorem \ref{rodl}. Then $\delta_{\co}(H')\leq 1$ for any $H'\subseteq H$.
 \end{observation}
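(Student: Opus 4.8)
The plan is to unpack condition $(\spadesuit)$ directly and show that it is incompatible with any subhypergraph having two or more edges through a single pair of vertices. Suppose $H$ satisfies $(\spadesuit)$, witnessed by an enumeration $V(H)=\{v_1,\dots,v_n\}$ and a colouring $\phi$ of the pairs covered by edges of $H$. Let $H'\subseteq H$ be arbitrary; it inherits the enumeration (restricted to $V(H')$) and the colouring $\phi$, and every edge of $H'$ still obeys the red/blue/green pattern dictated by $(\spadesuit)$. Now take any pair $\{v_i,v_j\}$ with $i<j$. The key observation is that the colour $\phi(v_iv_j)$ is fixed, yet its value constrains the position of any coneighbour $w$ of $v_i,v_j$: if $\phi(v_iv_j)=\mathrm{red}$ then in the edge $\{v_i,v_j,w\}$ the pair $v_iv_j$ must play the role of the smallest-index pair, forcing $w$ to come after both $v_i$ and $v_j$ in the enumeration; if $\phi(v_iv_j)=\mathrm{green}$ then $v_iv_j$ is the largest-index pair and $w$ must precede both; and if $\phi(v_iv_j)=\mathrm{blue}$ then $v_iv_j$ is the ``outer'' pair, so $w$ must lie strictly between $v_i$ and $v_j$ in the enumeration.

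First I would record these three cases as a short case analysis, noting in each case that $w$ is pinned down to a prescribed region of the enumeration relative to $v_i,v_j$ (after both, before both, or strictly in between). The point is not merely that $w$ lies in an interval but that the orientation of the two-path $v_i v_j w$ (equivalently, which of the three colour-slots the pair $v_iv_j$ occupies) is already determined by $\phi(v_iv_j)$, independently of $w$. Then I would argue that two distinct coneighbours $w_1\ne w_2$ of $v_i,v_j$ cannot coexist: consider the edge-pattern forced on the pair $\{w_1,w_2\}$ together with $v_i$ or with $v_j$. Concretely, in the blue case, both $w_1$ and $w_2$ lie strictly between $v_i$ and $v_j$; but then looking at, say, the triple containing $v_i, w_1, w_2$ and the triple containing $v_j, w_1, w_2$ (whichever are edges — at least the structure forces a conflict once we also use that $\phi$ is a single fixed colouring), we get that $\phi(w_1 w_2)$ would be simultaneously required to take two different values, a contradiction. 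The red and green cases are symmetric (reverse the enumeration), so it suffices to rule out one of them.

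Actually the cleanest route avoids even looking at $\{w_1,w_2\}$ as an edge: I would instead observe that the three colours partition, for each vertex $v$, the pairs $v_iv_j$ according to whether the interval they force for a common neighbour lies entirely to the right of, entirely to the left of, or straddling a given index — and then a direct counting/parity contradiction arises from having $d_{H'}(\{v_i,v_j\})\ge 2$. More carefully: fix the pair $\{v_i,v_j\}$ with $i<j$ and suppose $w_1,w_2$ are two coneighbours with (say) $\sigma(w_1)<\sigma(w_2)$. Apply the forced pattern to the edge $\{v_i,v_j,w_1\}$ and to $\{v_i,v_j,w_2\}$: both edges assign the same colour to $v_iv_j$ (it is fixed), hence $v_iv_j$ plays the same positional role in both. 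In the red case this says $\sigma(v_i)<\sigma(v_j)<\sigma(w_1)$ and $\sigma(v_i)<\sigma(v_j)<\sigma(w_2)$, fine so far; the contradiction must come from comparing $w_1$ and $w_2$ via a third edge or via the colouring of a pair incident to both. So the genuinely necessary ingredient is that $\phi$ is well-defined on each pair, and the main obstacle is choosing the right auxiliary pair/edge to extract the clash. I would present it as: WLOG (reversing the enumeration) $\phi(v_iv_j)\in\{\mathrm{red},\mathrm{blue}\}$; in each of these two subcases exhibit the pair whose colour is over-determined, conclude $d_{H'}(\{v_i,v_j\})\le 1$, and since the pair was arbitrary, $\delta_{\mathrm{co}}(H')\le 1$.

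The step I expect to be the main obstacle is isolating, in each colour case, precisely which pair among $\{v_i,w_1\},\{v_i,w_2\},\{v_j,w_1\},\{v_j,w_2\},\{w_1,w_2\}$ receives conflicting colour demands — it is a small but fiddly case check, and one has to be careful that the relevant pair is actually covered by an edge of $H'$ so that $\phi$ is defined on it (which it is, since it lies in an edge witnessing the conflict). Everything else is immediate from reading off $(\spadesuit)$.
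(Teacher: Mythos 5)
Your proposal sets out to prove something much stronger than the observation --- namely that \emph{every} pair of vertices of $H'$ has codegree at most $1$ --- and that stronger statement is false. Since $\delta_{\co}(H')$ is the \emph{minimum} of $d_{H'}(S)$ over all pairs $S$, the observation only requires exhibiting \emph{one} pair of small codegree, and choosing that pair correctly is the entire content of the proof. To see that your version cannot work, take $H$ to be the star with edges $\{v_1,v_2,v_k\}$ for $3\le k\le n$, coloured by $\phi(v_1v_2)=\mathrm{red}$, $\phi(v_1v_k)=\mathrm{blue}$ and $\phi(v_2v_k)=\mathrm{green}$ for all $k\ge 3$. This satisfies $(\spadesuit)$, yet $d_H(\{v_1,v_2\})=n-2$. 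This is exactly your red case, where you yourself note that two coneighbours $w_1,w_2$ produce no conflict ``so far'' and that a contradiction ``must come from comparing $w_1$ and $w_2$ via a third edge'': no such conflict exists, and none of the pairs $\{v_i,w_1\},\{v_i,w_2\},\{v_j,w_1\},\{v_j,w_2\},\{w_1,w_2\}$ is over-determined (the last one need not even be covered by an edge). The green and blue cases fail for the same reason; e.g.\ the two edges $\{v_1,v_3,v_4\}$ and $\{v_2,v_3,v_4\}$ are compatible with $(\spadesuit)$ and give the green pair $\{v_3,v_4\}$ codegree $2$.

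The correct argument exploits the freedom to choose which pair to examine. Your case analysis of what each colour forces on the position of a coneighbour is right and is precisely the needed ingredient: a green pair can only have coneighbours that precede \emph{both} of its vertices in the enumeration. So one should manufacture a green pair whose forced region contains at most one vertex. Start with $\{v_1,v_2\}$, the two earliest vertices of $H'$. If $d_{H'}(\{v_1,v_2\})=0$ we are done. Otherwise take an edge $v_1v_2v_i$; condition $(\spadesuit)$ forces $\phi(v_2v_i)=\mathrm{green}$, whence any coneighbour of $\{v_2,v_i\}$ must precede $v_2$ and therefore equals $v_1$, giving $d_{H'}(\{v_2,v_i\})=1$. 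This is the step missing from your write-up.
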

 \begin{proof}
Clearly, $H'$ also satisfies condition $(\spadesuit)$. Let $v_1,v_2,\ldots,v_{n'}$ be an enumeration of $V(H')$ and $\phi$ be a $3$-coloring of $\partial H'$ satisfying condition $(\spadesuit)$. If $d_{H'}(\{v_1,v_2\})=0$, then we are done. Otherwise, let $v_1v_2v_{i}$ be an edge of $H'$. Then condition $(\spadesuit)$ implies that $\phi(v_2v_{i})= \rm{green}$, and therefore $d_{H'}(\{v_2,v_i\})=1$.
\end{proof}

By a construction in \cite{reiher2018hypergraphs}, for every $\eta>0$, there is an infinite sequence of $(\frac{1}{27},\eta)$-dense $3$-graphs satisfying condition $(\spadesuit)$, which together with the above observation provides a negative answer to the following question asked by Falgas-Ravry, Pikhurko, Vaughan and Volec \cite{K43-codegree}.

\begin{question}
Let $(H_n)_{n\in\mathbb{N}}$ be a sequence of uniformly dense $3$-graphs with density $d>0$. Must there exist a sequence of subhypergraphs $(H'_n)_{n\in\mathbb{N}}$ with $H'_n\subseteq H_n$, $|V(H'_n)|\rightarrow\infty$ and $\delta_{\co}(H'_n)/|V(H'_n)|$ bounded away from zero?
\end{question}

\subsection{A bipartite vanishing  condition}
In our approach, we do not insist on satisfying $(\spadesuit)$ globally. We will construct infinitely many $3$-graphs satisfying $(\spadesuit)$ locally. 
For this purpose, the following reformulation of~\cref{rodl} using the structure property of link graphs is more helpful for us.
\begin{lemma} \label{permutation}
For any $3$-graph $F$, the following are equivalent.
\begin{itemize}
    \item $\pi_{\points}(F)>0$; 
    \item for every labeling $\sigma$ of $V(F)$, there is some vertex $v\in V(F)$ such that $L_{F}(v)$  contains a monotone $P_3$.
\end{itemize}
\end{lemma}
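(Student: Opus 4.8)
The plan is to show that \cref{permutation} is essentially a restatement of \cref{rodl}, by translating between the $3$-colouring $\phi$ appearing in condition $(\spadesuit)$ and a colouring of the edges of the link graphs $L_F(v)$. The key observation is that, fixing an enumeration (equivalently, a labeling $\sigma$) of $V(F)$, an edge $\{v_i,v_j,v_k\}$ with $\sigma(v_i)<\sigma(v_j)<\sigma(v_k)$ contributes, in the link graph of its \emph{middle} vertex $v_j$, exactly the path $v_i v_j v_k$, which is a monotone $P_3$ with respect to $\sigma$. Thus a monotone $P_3$ in some link graph is precisely a witness to an edge whose three pairs cannot be consistently coloured red/blue/green in the pattern demanded by $(\spadesuit)$. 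I would first prove the contrapositive of the `$\Leftarrow$' direction, and then the contrapositive of the `$\Rightarrow$' direction, each time reducing to \cref{rodl}.

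For the direction ``$(\spadesuit)$ holds $\Rightarrow$ some labeling avoids all monotone $P_3$'s'': take the enumeration $v_1,\dots,v_f$ and the colouring $\phi$ guaranteed by $(\spadesuit)$, and let $\sigma$ be the labeling with $\sigma(v_i)=i$. I claim that for every $v\in V(F)$, $L_F(v)$ has no monotone $P_3$ under $\sigma$. Suppose $v=v_j$ and $v_a v_j v_b$ is a $P_3$ in $L_F(v_j)$ with, say, $a<j<b$ (the decreasing case is symmetric). Then $v_a v_j, v_j v_b \in E(\partial F)$ come from edges of $F$; more to the point $\{v_a,v_j,v_b\}\in E(F)$, so by $(\spadesuit)$ we have $\phi(v_a v_j)=\text{red}$ (since $v_a$ is the smallest) while $\phi(v_j v_b)=\text{green}$ (since $v_j$ is the middle) — wait, that is consistent, so I instead rule out the \emph{truly} monotone case $a<j<b$ by examining which of $v_a,v_j,v_b$ plays which role. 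Actually the cleanest formulation: a $P_3$ $xyz$ in $L_F(v)$ means $\{x,v,y\},\{y,v,z\}$... no: $L_F(v)$ has edge $uw$ iff $uvw\in E(F)$, so a $P_3$ $v_a v_j v_b$ in $L_F(v)$ means $\{v_a,v_j,v\}$ and $\{v_j,v_b,v\}$ are edges — this is getting muddled, so the real work of this step is to \emph{carefully pin down} the correspondence ``$P_3$ in a link graph $\leftrightarrow$ pair of edges sharing a pair'' and then feed it into $(\spadesuit)$; once set up correctly, the monotone case forces two of the three roles red/blue/green to coincide on a shared pair, a contradiction. For the converse direction, given a labeling $\sigma$ with no monotone $P_3$ in any link graph, I would define $\phi$ on each pair $e=\{x,y\}\in E(\partial F)$ (with $\sigma(x)<\sigma(y)$) according to the position of $e$ within the edges of $F$ containing it, and use the no-monotone-$P_3$ hypothesis to verify that this colouring is well-defined and satisfies the red/blue/green pattern of $(\spadesuit)$.

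The main obstacle I anticipate is bookkeeping the precise combinatorial dictionary between the three possible ``roles'' of a pair inside an edge of $F$ (smallest–middle, smallest–largest, middle–largest under $\sigma$) and the structure of $P_3$'s in link graphs, and in particular checking \emph{consistency}: a single pair $\{x,y\}$ may lie in many edges of $F$, and for $\phi$ to be well-defined it must play the same role in all of them — this is exactly what the absence of monotone $P_3$'s should guarantee, but verifying it requires care. A secondary subtlety is that \cref{rodl} colours only the pairs covered by edges of $F$ (i.e.\ $E(\partial F)$), not all pairs, so I must make sure the link-graph reformulation is stated over the same set of pairs and that vertices of $V(F)$ not incident to any edge cause no trouble. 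I expect no genuine difficulty beyond this: the statement should reduce to \cref{rodl} by a direct, if slightly fiddly, translation, with no new extremal input needed.
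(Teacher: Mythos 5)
Your overall strategy --- reducing the lemma to \cref{rodl} via a dictionary between monotone $P_3$'s in link graphs and the colouring $\phi$ of $(\spadesuit)$ --- is exactly the paper's, and it does work. But as written the proposal has a genuine gap: the ``key observation'' you open with is false, the corrected dictionary is never pinned down, and both directions are explicitly deferred as ``the real work.'' An edge $\{v_i,v_j,v_k\}$ with $i<j<k$ contributes to $L_F(v_j)$ the single \emph{edge} $v_iv_k$, not a path; a monotone $P_3$ $xyz$ in $L_F(v)$ is a witness to \emph{two} hyperedges $\{x,y,v\}$ and $\{y,z,v\}$ of $F$ sharing the pair $\{y,v\}$. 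So the obstruction is not ``one edge whose three pairs cannot be coloured'' but a clash between two hyperedges on their common pair: if $v_iv_jv_k$ is monotone in $L_F(v_\ell)$ with $i<j<k$, then when $\ell<j$ the hyperedge $\{v_\ell,v_j,v_k\}$ forces $\phi(v_\ell v_j)=\text{red}$ while $\{v_\ell,v_i,v_j\}$ forces it to be blue or green (depending on whether $\ell<i$ or $\ell>i$); when $\ell>j$ the edge $\{v_i,v_j,v_\ell\}$ forces green while $\{v_j,v_k,v_\ell\}$ forces red or blue. This two-case check is the entire content of one direction and cannot be waved away; moreover it is not that ``two roles coincide'' on the shared pair, but that the shared pair is forced into two \emph{distinct} roles.

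For the converse, you correctly locate where the no-monotone-$P_3$ hypothesis enters (well-definedness of $\phi$), but again the verification is the substance: set $\phi(v_iv_j)$, for $i<j$, according to the position of a coneighbour $v_k\in N_F(v_iv_j)$ (red if $k>j$, blue if $i<k<j$, green if $k<i$); if two coneighbours $v_{k_1},v_{k_2}$ fell into different position classes, then $v_{k_1}v_jv_{k_2}$ would be a monotone $P_3$ in $L_F(v_i)$ or $v_{k_1}v_iv_{k_2}$ one in $L_F(v_j)$ --- note that here the monotone $P_3$ lives in the link of one vertex of the \emph{pair}, with the other vertex of the pair as its midpoint and the two coneighbours as endpoints, which is yet a third instance of the correspondence your sketch leaves muddled. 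Your secondary worries ($\phi$ being defined only on $\partial F$, isolated vertices) are non-issues. In short: right reduction, identical to the paper's in outline, but the combinatorial translation that constitutes the proof is misstated where it is stated and omitted where it matters.
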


\begin{proof} 
    Let $f=|V(F)|$. For one side, suppose $\pi_{\points}(F)>0$, and to the contrary that there exists a labeling $\sigma$ of $V(F)$ such that for every $v\in V(F)$, $L_{F}(v)$ does not contain a monotone $P_3$. Then this labeling induces an enumeration of $V(F)=\{v_1,v_2,\ldots,v_f\}$ by letting $v_i=\sigma^{-1}(i)$. For any $v_iv_j\in E(\partial F)$ with $i<j$ and $v_k\in N_F(v_iv_j)$, let
    \begin{equation} \phi(v_iv_j)=
    \begin{cases}
    \rm{red},   & k>j;\\
    \rm{blue},  & i<k<j;\\
    \rm{green}, & k<i.\nonumber
    \end{cases}
    \end{equation}
    Now we show that $\phi$ is well defined. Otherwise, there are $v_{k_1},v_{k_2}\in N_F(v_iv_j)$ such that $k_1<j<k_2$ or $k_1<i<k_2$ holds, then either $v_{k_1}v_jv_{k_2}$ is a monotone $P_3$ in $L_F(v_i)$ or $v_{k_1}v_iv_{k_2}$ is a monotone $P_3$ in $L_F(v_j)$, which contradicts to our assumption. But this coloring $\phi$ and the enumeration given by $\sigma$ implies $\pi_{\points}(F)=0$ by \cref{rodl}, which is also a contradiction.

    For the other side, suppose that for every labeling $\sigma$ of $V(F)$, there is some vertex $v\in V(F)$ such that $L_{F}(v)$ contains a monotone $P_3$. Let  $V(F)=\{v_1,v_2,\ldots,v_f\}$  be any enumeration of $V(F)$ and $\sigma$ be a labeling of $V(F)$ by setting $\sigma(v_i)=i$ for $i\in[f]$. Then by our assumption, there is a monotone $P_3=v_iv_jv_k$ with $i<j<k$ in $L_F(v_\ell)$ for some $\ell\in [f]$. Suppose that a coloring $\phi$ in \cref{rodl} exists. Now we consider the color of $v_{\ell}v_j$. If $\ell<j$, then $\{v_{\ell},v_j,v_k\}\in E(F)$ implies that $\phi(v_{\ell}v_j)=\rm{red}$ while $\{v_{\ell},v_i,v_j\}\in E(F)$ implies that $\phi(v_{\ell}v_j)=\rm{blue~or~green}$, a contradiction. If $\ell>j$, then $\{v_j,v_{\ell},v_k\}\in E(F)$ implies that $\phi(v_{\ell}v_j)=\rm{red~or~blue}$ while $\{v_i,v_j,v_{\ell}\}\in E(F)$ implies that $\phi(v_{\ell}v_j)=\rm{green}$, again a contradiction. Therefore, $\pi_{\points}(F)>0$ follows.
\end{proof}  

A 3-graph $F$ is said to be a \emph{$(2,1)$-type $3$-graph} with respect to a partition $V(F)=A\cup B$ if any edge $e\in E(F)$ has two vertices in $A$ and one vertex in $B$. For convenience, when we mention a $(2,1)$-type $3$-graph with respect to a partition $X\cup Y$ in the following, we always mean that each edge of this $3$-graph has two vertices in $X$ and one vertex in $Y$. That is, the order of $X,Y$ in the union matters. To prove our main theorem, we first strengthen \cref{permutation} for $(2,1)$-type 3-graphs.

\begin{lemma}\label{strengthen}
        Let $F$ be a $(2,1)$-type $3$-graph with respect to $V(F)=A\cup B$. Then $\pi_{\points}(F)>0$ if and only if for every labeling of  $A$, there exists $u\in B$ such that $L_F(u)$ contains a monotone $P_3$. 
\end{lemma}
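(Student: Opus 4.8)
The backward direction is essentially a special case of one direction of \cref{permutation}: if every labeling $\sigma$ of the whole vertex set $V(F)$ produces a vertex whose link contains a monotone $P_3$, then $\pi_{\points}(F)>0$. So the plan is to show that, for a $(2,1)$-type $3$-graph, it suffices to control labelings of $A$ only. Concretely, given a labeling $\tau$ of $A$ that produces no monotone $P_3$ in any $L_F(u)$ with $u\in B$, I would extend it to a labeling $\sigma$ of $V(F)=A\cup B$ which still produces no monotone $P_3$ in \emph{any} link graph $L_F(v)$, $v\in V(F)$; then \cref{permutation} gives $\pi_{\points}(F)>0$, which is the contrapositive of the ``only if'' direction. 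The real content is therefore the extension step.

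\medskip

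\noindent\textbf{Key steps.}

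\textit{Step 1 (forward, hard direction).} Assume for every labeling of $A$ some $u\in B$ has a monotone $P_3$ in $L_F(u)$. We must show every labeling $\sigma$ of $V(F)$ yields a vertex with a monotone $P_3$ in its link, so that \cref{permutation} applies. Given such a $\sigma$, restrict it to $A$ to get an induced labeling $\tau$ of $A$. By hypothesis there is $u\in B$ with a monotone $P_3$, say $a_1a_2a_3$ in $L_F(u)$ with $\tau(a_1)<\tau(a_2)<\tau(a_3)$. Since each edge through $u$ has its other two vertices in $A$ (as $u\in B$ and $F$ is $(2,1)$-type with the $A$-pair condition), the edges $\{u,a_1,a_2\}$ and $\{u,a_2,a_3\}$ lie in $E(F)$, and $\tau$ being induced by $\sigma$ means $\sigma(a_1)<\sigma(a_2)<\sigma(a_3)$. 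Hence $a_1a_2a_3$ is still a monotone $P_3$ in $L_F(u)$ under $\sigma$. So $\pi_{\points}(F)>0$.

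\textit{Step 2 (backward, easy direction).} Suppose $\pi_{\points}(F)>0$. By \cref{permutation}, for every labeling $\sigma$ of $V(F)$ there is $v\in V(F)$ with a monotone $P_3$ in $L_F(v)$. We must upgrade this to: for every labeling $\tau$ of $A$ alone, some $u\in B$ has a monotone $P_3$. Given $\tau$, choose \emph{any} extension to a labeling $\sigma$ of $V(F)$ that induces $\tau$ on $A$ — for instance place all of $B$ after all of $A$, i.e. take $\sigma=\tau\oplus\rho$ for an arbitrary labeling $\rho$ of $B$. Apply \cref{permutation} to $\sigma$: some $v$ has a monotone $P_3$, say $xyz$ in $L_F(v)$. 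Now I claim $v\in B$. Indeed, if $v\in A$, then the two edges witnessing the path, $\{v,x,y\}$ and $\{v,y,z\}$, each have two vertices in $A$; since $v\in A$, for each such edge exactly one of the remaining two endpoints is in $A$ and the other in $B$. Tracing which of $x,y,z$ can be in $B$: the middle vertex $y$ lies in both edges, so if $y\in B$ then both $x,z\in A$ and both edges have $A$-pair $\{v,x\}$, $\{v,z\}$ — fine so far, but then $x,z$ are the $B$-vertices of... no: re-examine. The cleaner route: if $v\in A$, then $\{v,x,y\}\in E(F)$ forces exactly one of $x,y$ into $B$, and $\{v,y,z\}\in E(F)$ forces exactly one of $y,z$ into $B$. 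If $y\in A$ then $x\in B$ and $z\in B$, but then the monotone condition $\sigma(x)<\sigma(y)<\sigma(z)$ (or reverse) together with $\sigma$ placing all of $B$ above all of $A$ forces $y\in B$, contradiction; if instead $y\in B$, then $x,z\in A$, and again $y\in B$ sits above $x$ and below $z$ in $\sigma$ — impossible since $x,z\in A$ are all below $y$. Either way a contradiction, so $v\in B$. Finally, restricting $\sigma$ to $A$ recovers $\tau$ up to order, so the path $xyz$ (whichever two of $x,y,z$ lie in $A$, with the third equal to... wait — if $v\in B$, the path $xyz$ has vertices in $A$: indeed each edge $\{v,x,y\},\{v,y,z\}$ has $v\in B$, so the $A$-pair is exactly the other two vertices, giving $x,y,z\in A$). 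Then $xyz$ is a monotone $P_3$ in $L_F(v)$, $v\in B$, with respect to $\sigma|_A$, hence with respect to $\tau$ since $\tau$ is induced by $\sigma$. This is exactly the desired conclusion.

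\medskip

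\noindent\textbf{Main obstacle.} The only genuinely delicate point is the case analysis in Step 2 showing that the vertex $v$ produced by \cref{permutation} must lie in $B$ (equivalently, that the witnessing path lies inside $A$). This relies crucially on choosing the extension $\sigma$ so that all of $B$ is labeled above all of $A$: a monotone $P_3$ whose middle vertex is in $B$ but whose endpoints straddle the $A$/$B$ cut cannot occur, and a monotone $P_3$ with middle vertex in $A$ cannot use a $B$-vertex as an endpoint once one checks, using the $(2,1)$-type structure, exactly which vertices of an edge can be the $B$-vertex. Once $v\in B$ is established, the rest is just unwinding the definition of ``induced labeling'' and is routine. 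I would write Step 2's case check carefully, as it is the crux; Step 1 is a direct application of the already-proved \cref{permutation}.
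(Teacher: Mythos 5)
Your proposal is correct and follows essentially the same route as the paper: for the direction assuming $\pi_{\points}(F)>0$, extend the given labeling of $A$ by placing all of $B$ above $A$ and use the bipartite structure of $L_F(v)$ for $v\in A$ to force the witness vertex into $B$; for the converse, restrict an arbitrary labeling of $V(F)$ to $A$ and transfer the monotone $P_3$ back. The only cosmetic issue is that you swap the labels ``forward''/``backward'' relative to the lemma's statement, and the case analysis in your Step 2 should be written cleanly (it is exactly the paper's observation that $L_F(v)$ is bipartite between $A$ and $B$ when $v\in A$).
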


\begin{proof}
For one side, suppose $\pi_{\points}(F)>0$. Let $\sigma_1$ be a labeling of $A$ and $\sigma_2$ be a labeling of $B$,  $\sigma=\sigma_1\oplus\sigma_2$.
By \cref{permutation}, there is some $u\in A\cup B$ such that $L_F(u)$ contains a monotone $P_3$ with respect to $\sigma$. If $u\in A$, then $L_{F}(u)$ is a bipartite graph on two parts $A$ and $B$. In this case, any $P_3$ in $L_{F}(u)$ has two end points lying in one part and the mid point lying in the other part. Since any vertex in $B$ has a larger label than any vertex in $A$, all of $P_3$ in $L_{F}(u)$ are not monotone with respect to $\sigma$. Therefore, $u\in B$ and the monotone $P_3$ (with respect to $\sigma$) lies in $A$ and is also monotone with respect to $\sigma_1$.

For the other side, let $\sigma$ be a labeling of $A\cup B$. Restricting $\sigma$ to $A$ induces a labeling of $A$, denoted by $\sigma_0$. By the assumption, there is some $u\in B$ such that $L_F(u)$ contains a monotone $P_3$ with respect to the labeling $\sigma_0$. Note that this monotone $P_3$ is also monotone with respect to the labeling $\sigma$. Hence, $\pi_{\points}(F)>0$ follows from \cref{permutation}, which finishes the proof. 
\end{proof}

\begin{definition}[Auxiliary graph $\Gamma$]\label{defn:aux}
Let $F$ be a $(2,1)$-type $3$-graph with respect to $A\cup B$. Define an auxiliary graph $\Gamma_F$ as follows: $V(\Gamma_F)=A$, and for any two vertices $u,v\in A$, $uv$ is an edge of $\Gamma_F$ if and only if there is $w\in A$ and $x\in B$ such that $uwx, vwx\in E(F)$.    
\end{definition}

 In other words, two vertices in $A$ are joined in $\Gamma_F$ if they are the endpoints of a $P_3$ in the link graph of some vertex in $B$. Assisted by this auxiliary graph, we can extract a subhypergraph $F'$, which admits certain `connectedness' property and satisfies $\pi_{\points}(F')>0$, from any $(2,1)$-type $3$-graph $F$ with $\pi_{\points}(F)>0$. This `connectedness' property of $F'$ allows us to prove a cluserting phenomenon in our construction.

\begin{lemma} \label{connected}
    Let $F$ be a $(2,1)$-type $3$-graph with respect to $V(F)=A\cup B$. If $\pi_{\points}(F)>0$, then there is an induced subhypergraph $F'\subseteq F$ with $\pi_{\points}(F')>0$ such that $\Gamma_{F'}$ is a subgraph of some connected component of $\Gamma_F$.
\end{lemma}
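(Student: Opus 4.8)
The plan is to start from the given $(2,1)$-type $3$-graph $F$ with $\pi_{\points}(F)>0$ and, among all induced subhypergraphs of $F$ still having positive uniform Tur\'an density, pick one, call it $F'$, which is minimal with respect to the number of vertices of $A$ that it uses; here "induced" means $F'=F[A'\cup B']$ for some $A'\subseteq A$, $B'\subseteq B$, with the understanding that $F'$ is again viewed as a $(2,1)$-type $3$-graph with respect to $A'\cup B'$. Since $\pi_{\points}(F)>0$ and $F$ itself is a candidate, such a minimal $F'$ exists. I claim $\Gamma_{F'}$ is connected, which immediately gives the conclusion (taking "the" connected component of $\Gamma_F$ to be an arbitrary one containing the component of $\Gamma_{F'}$ — actually we get something stronger, namely $\Gamma_{F'}$ itself is connected). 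The point of the minimality is exactly to rule out that $\Gamma_{F'}$ splits into two nonempty pieces.

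So suppose for contradiction that $V(\Gamma_{F'})=A'$ partitions as $A_1\cup A_2$ with no edge of $\Gamma_{F'}$ between $A_1$ and $A_2$, both nonempty. For $i\in\{1,2\}$ let $B_i=\{u\in B': N_{F'}(u)\subseteq A_i\}$, i.e.\ those $u\in B'$ whose link graph $L_{F'}(u)$ lives entirely on $A_i$ (recall $L_{F'}(u)$ for $u\in B'$ is a graph on $A'$ since $F'$ is $(2,1)$-type). The key observation is that $B'=B_1\cup B_2$: if some $u\in B'$ had neighbours in both $A_1$ and $A_2$ in its link graph, then since $L_{F'}(u)$ is a graph on $A'$ some $P_3$ in it — indeed, traversing $L_{F'}(u)$, one finds a $P_3$ with one endpoint in $A_1$ and the other in $A_2$ (here one needs $L_{F'}(u)$ to be connected as a graph on its own neighbourhood, which may fail; see the obstacle below) — would give, via \cref{defn:aux}, an edge of $\Gamma_{F'}$ crossing the partition, a contradiction. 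Granting $B'=B_1\sqcup B_2$, set $F'_i=F'[A_i\cup B_i]$. Every edge of $F'$ has its $B'$-vertex in some $B_i$ and then both its $A'$-vertices in $A_i$, so $E(F')=E(F'_1)\sqcup E(F'_2)$, and in fact $F'_1$ and $F'_2$ are vertex-disjoint (on $A'$) with no edge spanning both. Now \cref{strengthen} finishes it: take a labeling $\sigma$ of $A'=A_1\cup A_2$ witnessing $\pi_{\points}(F'_1)=0$ on $A_1$ and $\pi_{\points}(F'_2)=0$ on $A_2$ — more precisely, by \cref{strengthen} applied to $F'$, $\pi_{\points}(F')>0$ yields, for the labeling of $A'$ that lists $A_1$ first (in an order chosen to defeat $F'_1$, which we may by minimality since $|A_1|<|A'|$) then $A_2$ (ordered to defeat $F'_2$), some $u\in B'$ with $L_{F'}(u)$ containing a monotone $P_3$; but $u\in B_1$ forces that $P_3$ inside $A_1$, contradicting the choice of the $A_1$-order, and symmetrically for $B_2$. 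Hence $\Gamma_{F'}$ is connected.

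Let me make the minimality step precise, since that is where the real content is. By minimality of $|A'|$, for $i=1,2$ we have $\pi_{\points}(F'_i)=0$ — here I use that $F'_i$ is an induced $(2,1)$-type subhypergraph of $F$ (induced on $A_i\cup B_i\subseteq A\cup B$) with $|A_i|<|A'|$, so it cannot have positive uniform Tur\'an density. Then by the forward direction of \cref{strengthen} (contrapositive), for each $i$ there is a labeling $\tau_i$ of $A_i$ such that no $u\in B_i$ has a monotone-$P_3$ in $L_{F'_i}(u)=L_{F'}(u)$. Take $\sigma=\tau_1\oplus\tau_2$ as a labeling of $A'$. For any $u\in B'$, say $u\in B_1$: $L_{F'}(u)$ is supported on $A_1$, and since $\sigma$ restricted to $A_1$ is just $\tau_1$, a $P_3$ in $L_{F'}(u)$ is $\sigma$-monotone iff it is $\tau_1$-monotone, which never happens; the case $u\in B_2$ is identical since $\sigma$ restricted to $A_2$ is an order-isomorphic copy of $\tau_2$. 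So $\sigma$ is a labeling of $A'$ with no $u\in B'$ having a monotone $P_3$, whence $\pi_{\points}(F')=0$ by \cref{strengthen}, the desired contradiction.

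The main obstacle is the step asserting that a vertex $u\in B'$ whose link graph $L_{F'}(u)$ meets both $A_1$ and $A_2$ produces a $\Gamma_{F'}$-edge across the partition. The definition of $\Gamma_F$ only puts an edge between the two \emph{endpoints} of a length-two path (a $P_3$) in some link graph, not between arbitrary pairs in the same link-graph component; so if $L_{F'}(u)$ happens to be a perfect matching on a subset of $A'$ with matched pairs entirely inside $A_1$ or entirely inside $A_2$, it contributes no crossing edge even though it touches both sides — but that case is fine, as then $u$'s edges don't actually force anything crossing and we can still assign $u$ to whichever side, as long as the edge set splits. The genuinely delicate case is when $L_{F'}(u)$ has a vertex of $A_1$ and a vertex of $A_2$ in the \emph{same} component of $L_{F'}(u)$: then that component contains a path between them, hence a $P_3$ with one endpoint on each side... no — a $P_3$ is exactly two edges, so I only directly get crossing $\Gamma$-edges when $A_1$ and $A_2$ vertices are at distance exactly $2$ in $L_{F'}(u)$. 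To handle longer distances I should instead define $B_i$ component-wise: decompose $L_{F'}(u)$ into connected components and note each component is, via $P_3$'s, entirely within one $\Gamma_{F'}$-component (any two vertices of a connected graph on $\ge 3$ vertices of that component, or an edge-component, land in the same $\Gamma_{F'}$-class — an isolated edge $xy$ of $L_{F'}(u)$ needs separate care, but such $x,y$ carry an edge $xuy\in E(F')$ and if this is the only structure they may legitimately be split, in which case we reassign). Getting this bookkeeping exactly right — ensuring $E(F')$ really does split as $E(F'_1)\sqcup E(F'_2)$ with $F'_i$ induced and vertex-disjoint on $A'$ — is the crux; everything else is a clean application of \cref{strengthen} plus the minimality choice.
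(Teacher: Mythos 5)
Your overall strategy --- split $A'$ along the components of the auxiliary graph, apply \cref{strengthen} to each piece to obtain labelings, and glue them with $\oplus$ so that any $P_3$ whose midpoint lies in a different piece from its endpoints is automatically non-monotone --- is essentially the paper's, and the minimal-counterexample framing is a harmless repackaging of it. The genuine gap is the one you flag yourself and do not close: your definition $B_i=\{u\in B': N_{F'}(u)\subseteq A_i\}$ does not cover $B'$, and the component-wise repair you sketch does not fix this. Concretely, take $u\in B'$ whose link graph consists of a path $xay$ with $x,a,y\in A_1$ together with one further edge lying entirely inside $A_2$ and disjoint from $\{x,a,y\}$. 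No $P_3$ of $L_{F'}(u)$ crosses the partition, so $u$ contributes no crossing $\Gamma_{F'}$-edge and this configuration is fully consistent with your hypotheses; yet $u\notin B_1\cup B_2$, so $u$ appears in neither $F'_1$ nor $F'_2$, and the labeling $\tau_1$ supplied by \cref{strengthen} was never required to destroy the monotone $P_3$ given by $xay$. Your final verification therefore fails for exactly such $u$. ``Reassigning'' $u$ to one side does not help, because then $F'_i$ is no longer the induced subhypergraph $F[A_i\cup B_i]$, which is what you need in order to invoke minimality and \cref{strengthen}.

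The paper sidesteps all of this by letting the $B_i$ overlap: it sets $B_i=\{b\in B: ba_1a_2\in E(F)\ \text{for some}\ a_1,a_2\in A_i\}$, i.e.\ $b$ joins $B_i$ as soon as it has a single link edge inside $A_i$, and puts $F_i=F[A_i\cup B_i]$; no attempt is made to partition $B$ or to write $E(F)$ as a disjoint union. Then every $P_3$ of $L_F(b)$ with all three vertices in one $A_i$ is a $P_3$ of $L_{F_i}(b)$ and is killed by $\sigma_i$; a $P_3$ whose endpoints lie in different components cannot occur, since its endpoints would be $\Gamma_F$-adjacent; and a $P_3$ with endpoints in $A_i$ but midpoint in $A_j$ with $j\neq i$ is non-monotone by the $\oplus$ construction. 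Replacing your $B_i$ with this overlapping version (and dropping the disjoint-union bookkeeping, which is not needed) makes your argument go through.
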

\begin{proof}
    Let $A_1,A_2,\ldots,A_k\subseteq A$ be the vertex sets of all the connected components of $\Gamma_F$. For each $i\in[k]$, let 
    
    \[B_i=\{b~:~ba_1a_2\in E(F),b\in B,a_1,a_2\in A_i\},\]
    
    \noindent and $F_i=F[A_i\cup B_i]$. Then one can easily check that $\Gamma_{F_i}$ is a subgraph of the connected component of $\Gamma_F$ on vertex set $A_i$. Now it suffices to show that there is some $i\in[k]$ such that $\pi_{\points}(F_i)>0$. 

    We prove it by contradiction. If $\pi_{\points}(F_i)=0$ for all $i\in[k]$, by \cref{strengthen}, there is a labeling $\sigma_i$ of $A_i$ such that for every $b\in B_i$, there is no monotone $P_3$ in $L_{F_i}(b)$ with respect to $\sigma_i$.  Setting $\sigma=\sum_{i=1}^{k}\sigma_i$, we next show that for every $b\in B$, there is no monotone $P_3$ in $L_F(b)$ with respect to $\sigma$, which contradicts to \cref{strengthen}.

    Let $xay$ be a $P_3$ in $L_F(b)$ for some $b\in B$ with $x\in A_i,a\in A_j,y\in A_{\ell}$. By the definition of $\Gamma_F$, $xy$ is an edge of $\Gamma_F$, and therefore $i=\ell$. If $i=j$, then $x,a,y\in A_i$, $b\in B_i$ and $xay$ can not be a monotone $P_3$ with respect to $\sigma$ since it is not monotone with respect to $\sigma_i$.  If $i\neq j$, then either 
    $\sigma(a)>\max\{\sigma(x),\sigma(y)\}$ or $\sigma(a)<\min\{\sigma(x),\sigma(y)\}$, and therefore $xay$ is again not a monotone $P_3$. Combining these cases, we conclude that there is no monotone $P_3$ with respect to $\sigma$ in $L_F(b)$.   
\end{proof}


\section{Proof of Theorem \ref{main}}

Let $F$ be a $3$-graph. A vertex partition $A\cup B\cup C$ of $V(F)$ is said to be \textit{good} if $F[A\cup B],F[B\cup C],F[C\cup A]$ are all $(2,1)$-type $3$-graphs, and there are no other edges in $F$. The following lemma shows that in a $3$-graph $F$ with $\pi_{\points}(F)>0$, if $V(F)$ admits a good vertex partition,  then we can always find a $(2,1)$-type subhypergraph $F'$ with $\pi_{\points}(F')>0$. 

\begin{lemma}\label{3parts}
Let $F$ be a $3$-graph with $\pi_{\points}(F)>0$. If there exists a good vertex partition $A\cup B\cup C$ of $V(F)$, then $\pi_{\points}\left(F[A\cup B]\right)>0$ or $\pi_{\points}\left(F[B\cup C]\right)>0$ or $\pi_{\points}\left(F[C\cup A]\right)>0.$    
\end{lemma}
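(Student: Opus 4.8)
The plan is to contrapose: assume $\pi_{\points}(F[A\cup B])=\pi_{\points}(F[B\cup C])=\pi_{\points}(F[C\cup A])=0$ and deduce $\pi_{\points}(F)=0$ by exhibiting, via \cref{permutation}, a single labeling $\sigma$ of $V(F)$ under which no link graph $L_F(v)$ contains a monotone $P_3$. The three pieces are $(2,1)$-type $3$-graphs, so by \cref{strengthen} (applied with the orientation recorded in the definition of ``good'': say $F[A\cup B]$ has its pairs in $A$ and apex in $B$, $F[B\cup C]$ has pairs in $B$ and apex in $C$, $F[C\cup A]$ has pairs in $C$ and apex in $A$) each vanishing uniform density yields a labeling of the ``pair-part'' that kills every monotone $P_3$ in every apex link. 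Concretely: there is a labeling $\sigma_A$ of $A$ such that no $L_{F[A\cup B]}(b)$ ($b\in B$) has a monotone $P_3$; a labeling $\sigma_B$ of $B$ such that no $L_{F[B\cup C]}(c)$ ($c\in C$) has a monotone $P_3$; and a labeling $\sigma_C$ of $C$ such that no $L_{F[C\cup A]}(a)$ ($a\in A$) has a monotone $P_3$.

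The obstacle — and the reason the statement is nontrivial — is that these three labelings come with no compatible way to interleave $A$, $B$, $C$ on a common line, because the three ``pair-parts'' cyclically overlap: $A$ is a pair-part for one piece and an apex-part for another. If I simply concatenate, say $\sigma=\sigma_A\oplus\sigma_B\oplus\sigma_C$, then for an edge $aba'\in F[A\cup B]$ with apex $b$, the link $L_F(b)$ sees the $P_3$ $a\,?\,a'$ inside $A$, which is fine; but the link $L_F(a)$ of a vertex $a\in A$ now must avoid monotone $P_3$'s coming both from $F[A\cup B]$-edges (apex in $B$, so the $P_3$ lives in $A\cup B$ — a bipartite link, no monotone $P_3$ possible since all of $A$ precedes all of $B$, exactly as in the proof of \cref{strengthen}) and from $F[C\cup A]$-edges where $a$ is an apex (the $P_3$ lives in $C$, so I need $\sigma$ restricted to $C$ to induce $\sigma_C$ — true for any block ordering putting $C$ last). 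The delicate case is the link $L_F(a)$ for $a\in A$ restricted to edges of $F[A\cup B]$ *and* $F[C\cup A]$ simultaneously, i.e. a $P_3$ with one endpoint in $B$ and one in $C$ through a midpoint — but ``good'' forbids such mixed edges, so no such $P_3$ exists. The key realization is therefore that because the partition is \emph{good} (no cross edges), every link $L_F(v)$ decomposes as a disjoint union of the corresponding links in the two pieces $v$ participates in, and those two link-graphs live on disjoint vertex sets (one is a subset of one pair-part, the other bipartite between two parts); so a monotone $P_3$ in $L_F(v)$ lies entirely in one piece.

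Thus the remaining work is to choose the block order of $A,B,C$ on the line so that all three pieces are simultaneously handled. Here I exploit that a bipartite link never contains a monotone $P_3$ regardless of how the two blocks are placed, so the only constraints are: $\sigma|_A$ induces $\sigma_A$, $\sigma|_B$ induces $\sigma_B$, $\sigma|_C$ induces $\sigma_C$. Any concatenation $\sigma=\sigma_A\oplus\sigma_B\oplus\sigma_C$ meets all three. Then I verify case by case over $v\in A\cup B\cup C$ and over which piece an edge through $v$ belongs to: if $v=b\in B$, edges of $F[A\cup B]$ give $P_3$'s in $A$ (nonmonotone by choice of $\sigma_A$, using that $\sigma|_A$ induces $\sigma_A$), edges of $F[B\cup C]$ give bipartite $B$–$C$ links (nonmonotone since $B$ precedes $C$); symmetrically for $v\in A$ (pieces $F[A\cup B]$ bipartite, $F[C\cup A]$ giving nonmonotone $P_3$'s in $C$) and $v\in C$ (pieces $F[B\cup C]$ giving nonmonotone $P_3$'s in $B$, $F[C\cup A]$ bipartite $C$–$A$; note $C$ precedes $A$ in the cyclic sense is irrelevant since bipartite links are always fine). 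Hence $L_F(v)$ has no monotone $P_3$ for any $v$, so $\pi_{\points}(F)=0$ by \cref{permutation}, completing the contrapositive. The main subtlety to get right in writing this up is bookkeeping the three orientations and confirming that ``good'' really does kill all mixed $P_3$'s; I expect no computation, only careful case analysis.
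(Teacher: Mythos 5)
Your proposal is correct and takes essentially the same route as the paper: argue the contrapositive, use \cref{strengthen} to obtain labelings $\sigma_A,\sigma_B,\sigma_C$ of the three pair-parts, concatenate them as $\sigma_A\oplus\sigma_B\oplus\sigma_C$, and check each link $L_F(v)$ splits (by goodness) into a within-one-part piece handled by the chosen labeling and a bipartite piece that can never contain a monotone $P_3$. The bookkeeping you flag as the remaining work is exactly what the paper's proof does, and your observations (vertex-disjointness of the two link pieces, block order being irrelevant for the bipartite pieces) are all sound.
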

\begin{proof}
Let $F_1,F_2$ and $F_3$ denote the three induced $(2,1)$-type $3$-graphs $F[A\cup B],F[B\cup C]$ and $F[C\cup A]$, respectively. Suppose to the contrary that $\pi_{\points}(F_i)=0$ for all $1\leq i\leq 3$. Then by \cref{strengthen}, there are three labelings $\sigma_{A},\sigma_{B},\sigma_{C}$ of the three parts $A$,$B$,$C$, respectively, such that for every $a\in A$, $b\in B$ and $c\in C$, no monotone $P_3$ exists in $L_{F_1}(b)$, $L_{F_2}(c)$ and $L_{F_3}(a)$. Let $\sigma=\sigma_{A}\oplus \sigma_{B}\oplus \sigma_{C}$. Now we prove that for every $v\in V(F)$, $L_F(v)$ does not contain a monotone $P_3$ with respect to $\sigma$, and hence a contradiction follows from \cref{permutation}.

For any $b\in B$, note that $L_F(b)$ is the disjoint union of $L_{F_1}(b)$ and $L_{F_2}(b)$. Since there is no monotone $P_3$ in $L_{F_1}(b)$ with respect to the labeling $\sigma_A$, there is no monotone $P_3$ in $L_{F_1}(b)$ with respect to the labeling $\sigma$. On the other hand, noting that $L_{F_2}(b)$ is a bipartite graph with partition $B\cup C$, so the two endpoints of any $P_3$ in $L_{F_2}(b)$ are both located in one part while the mid point is located in the other part. Since $\sigma=\sigma_{A}\oplus\sigma_{B}\oplus\sigma_{C}$, one can easily see that no monotone $P_3$ exists in $L_{F_2}(b)$. Consequently, for any $b\in B$, there is no monotone $P_3$ in $L_F(b)$ with respect to the labeling $\sigma$. By similar arguments, the same holds for any $x\in A\cup C$ as desired.
\end{proof}


We start with a `bipartite' construction.

\begin{theorem}\label{good}
For any positive integer $k$,  there exists $\varepsilon=\varepsilon_k>0$ such that for infinitely many integers $n$, there exists a $(2,1)$-type $3$-graph $H$ with respect to a vertex partition $V(H)=A\cup B$ such that $|A|=|B|=n$ and the following conditions are satisfied.
\begin{enumerate}
    \item[\emph{(C1)}] $d_H(\{a_1,a_2\})\geq\varepsilon n$ for any two vertices $a_1,a_2\in A$;
    \item[\emph{(C2)}] $d_H(\{a,b\})\geq\varepsilon n$ for every $a\in A$ and $b\in B$;
    \item[\emph{(C3)}] $H$ contains no copy of any $3$-graph $F$ with $|V(F)|\le k$ and $\pi_{\points}(F)>0$.
\end{enumerate}
\end{theorem}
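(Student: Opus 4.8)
The plan is to produce the desired $H$ as a \emph{random geometric} $(2,1)$-type $3$-graph on a circle, to check (C1) and (C2) by concentration, and to deduce (C3) by reducing — via \cref{connected} and \cref{strengthen} — to the statement that $H$ contains no \emph{connected} forbidden $3$-graph, which itself follows from a clustering phenomenon (the analogue of \cref{cl:cluster}). Concretely, I would place the vertices of $A$ at $n$ roughly equally spaced points of the circle $\mathbb R/\mathbb Z$, fix the resulting cyclic order, and for each $b\in B$ sample independent data — at least a uniform ``center'' $\theta_b\in\mathbb R/\mathbb Z$ and uniform signs $c_b(a)\in\{+,-\}$ for $a\in A$ — subject to a deterministic boundary forcing near $\theta_b$. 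The link graph $L_H(b)$ is then defined by a ``staircase'' rule relative to the linear order obtained by cutting the circle at $\theta_b$: roughly, $\{a,a',b\}$ is an edge when $a,a'$ lie close on the circle, the earlier of the two (in the cut order) carries $+$ and the later carries $-$, the forcing ensuring the extreme vertices behave; a short computation then gives that every $L_H(b)$ is bipartite (so has no short odd cycle) and that every vertex has linear degree in it. The precise recipe — the width parameter governing ``close,'' how close pairs straddling a cut are recovered, and how many scales/offsets are used — is where the work is, and the number of scales depends only on $k$, which is the source of $\varepsilon=\varepsilon_k$. For \cref{main} one finally superimposes three such $(2,1)$-type $3$-graphs cyclically on $V=X\cup Y\cup Z$, placing one copy's edges on $XY$, a second's on $YZ$, a third's on $ZX$, so that every good partition of a forbidden $F$ is inherited and \cref{3parts} applies.

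\textbf{Conditions (C1) and (C2).} For (C1), fixing a pair $\{a_1,a_2\}\subseteq A$, the key point is that away from the forced blocks the signs are independent fair coins, so the probability that a fixed $b$ creates an edge containing $\{a_1,a_2\}$ is bounded below by an absolute constant \emph{uniformly in the circular distance between $a_1$ and $a_2$} — this is precisely the advantage of the sign/staircase mechanism over a plain ``threshold'' one, and is where the circle and the width parameter are used; a Chernoff bound and a union bound over the $\binom n2$ pairs give $d_H(\{a_1,a_2\})\ge\varepsilon n$ for all pairs with probability $1-o(1)$. For (C2), the boundary forcing makes every vertex of every $L_H(b)$ adjacent to at least $\varepsilon n$ vertices of the forced block on the appropriate side, so $d_H(\{a,b\})\ge\varepsilon n$ for all $a\in A,\ b\in B$, once the $A$-points are (as they are with probability $1-o(1)$) roughly equidistributed.

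\textbf{Condition (C3).} Suppose $F\subseteq H$ with $|V(F)|\le k$ and $\pi_{\points}(F)>0$. Viewing $F$ as a $(2,1)$-type $3$-graph with the partition inherited from $A\cup B$ and applying \cref{connected} — iterating on the components of the resulting auxiliary graph if needed — I may assume $\Gamma_F$ is connected. The crucial clustering claim is: with probability $1-o(1)$, for every such copy the set $V(F)\cap A$ is contained in a short arc $W$ of the circle that is innocuous for all $\le k$ vertices of $V(F)\cap B$ (it contains no cut point, and no scale/window boundary relevant to them). The reason is that each edge of $\Gamma_F$ forces its two $A$-endpoints to be the ends of a $P_3$ in some $L_H(b)$, which the staircase-on-a-window rule permits only when they are close on the circle, and connectedness of $\Gamma_F$ then propagates this over all of $V(F)\cap A$. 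On such an arc $W$ every restricted link $L_H(b)[W]$ is a staircase for the natural order of $W$, hence has no monotone $P_3$; so the labeling of $V(F)\cap A$ induced by that order leaves no $u\in V(F)\cap B$ with a monotone $P_3$ in $L_F(u)$, and \cref{strengthen} gives $\pi_{\points}(F)=0$, a contradiction. Taking $n$ large enough that the failure probabilities in (C1), (C2) and in the clustering claim sum to less than $1$, all three conditions hold simultaneously — for infinitely many $n$.

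\textbf{Main obstacle.} The heart of the matter is that (C1)--(C2) and (C3) pull in opposite directions: making every $L_H(b)$ locally orderable tends to create vertices of tiny codegree at the ``ends'' of the order, while forcing all codegrees to be linear forces the neighbourhoods in the link graphs — hence the $P_3$'s, hence the edges of $\Gamma_H$ — to be long-range, which reintroduces small forbidden configurations such as the $6$-vertex $3$-graph whose three link graphs are the three $2$-edge paths on a common triple (one checks via \cref{rodl} that it has positive $\points$-density). Engineering the geometric/cyclic rule so that the link graphs are simultaneously (i) of linear minimum degree with no boundary effect, (ii) odd-cycle-free, and (iii) short-range enough to pin any connected forbidden configuration into a single cut-free arc, \emph{while still} covering every pair of vertices, is exactly what the construction must achieve, and proving the clustering lemma rigorously is the step I expect to be the most delicate.
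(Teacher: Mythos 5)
Your overall strategy is the paper's: a random geometric construction on the circle, Chernoff plus union bounds for (C1)--(C2), a clustering claim combined with \cref{connected} and \cref{strengthen} for (C3), and a cyclic tripartite assembly via \cref{3parts} for \cref{main}. The reduction framework is correct. However, the concrete mechanism you propose for the link graphs has a genuine gap, and it sits exactly at the tension you yourself flag at the end. Your rule makes $\{a,a',b\}$ an edge only when $a,a'$ are \emph{close} on the circle (within some width $w$). For the clustering claim to force $V(F)\cap A$ into an arc shorter than a half circle after propagating along up to $k$ edges of $\Gamma_F$, you need $w=O(1/k)$ as a fraction of the circle. But then any pair $a_1,a_2\in A$ at circular distance greater than $w$ is contained in \emph{no} edge of $H$, so $d_H(\{a_1,a_2\})=0$ and (C1) fails for almost all pairs. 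Your assertion that the edge probability is ``bounded below by an absolute constant uniformly in the circular distance'' is not justified and is false for the rule as stated; the signs $c_b(a)$ only decide which close pairs become edges, they cannot create long-range ones. Adding ``scales/offsets'' with large widths to cover distant pairs reintroduces long-range $P_3$'s in those links, so edges of $\Gamma_F$ no longer force proximity and the clustering claim collapses. This is not a detail to be filled in later: it is the central difficulty of the theorem.

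The paper resolves it with a different device. The circle around each vertex $a_i$ is partitioned into $r=4k$ arcs $S_{i0},\dots,S_{i,r-1}$ of length $\approx n/r$, each \emph{anchored relative to $a_i$ but possibly far from it}; the vertex picks one arc $S_{iX_i}$ uniformly at random, and $a_ia_j$ is an edge of the link graph iff each lies in the other's chosen arc. Since the $r$ arcs of $a_i$ cover all other vertices, \emph{every} pair (at any distance) becomes an edge with probability exactly $1/r^2$, which gives (C1) and (C2) by concentration; yet the neighbourhood of $a_i$ in each link graph is still confined to the single short arc $S_{iX_i}$, which moreover does not contain $a_i$ — so the two endpoints of any $P_3$ centred at $a_i$ lie in a short arc avoiding $a_i$, and a monotone $P_3$ in the half-circle order is geometrically impossible (\cref{cl:cluster}). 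Note also that the paper's contradiction in (C3) does not need the arc to avoid a ``cut point'': it uses only that an arc of length $2\pi/r$ containing both endpoints but not the midpoint would have to wrap around the back of the half circle. To repair your write-up you would need to replace the staircase-with-signs rule by a mechanism with this ``short neighbourhood anchored away from the vertex, but covering all pairs in expectation'' property, or prove such a property for whatever multi-scale rule you settle on; as it stands, (C1) and the clustering claim cannot both hold.
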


\begin{proof}
Set $\varepsilon_k=\frac{1}{32k^2}$, and let $n$, $q$, $r$ be integers such that $r=4k$, $n=qr+1$ and $n\gg r$. All the indices in the following construction are modulo $n$.

We first construct a random graph $G$ on vertex set $\{a_0,a_1,\ldots,a_{n-1}\}$. For every $0\leq i\leq n-1$ and $0\leq j\leq r-1$, let 
\[S_{ij}=\{a_{i+t}~:~t\in\{jq+1,jq+2,\cdots,(j+1)q\}\}.\] 
Let $X_0,X_1,\ldots,X_{n-1}$ be $n$ independent random variables, each of which takes a value from $\{0,1,\ldots, r-1\}$ uniformly at random. Define our random graph $G$ as follows: $V(G)=\{a_0,a_1,\ldots,a_{n-1}\}$, and for every $0\leq i <j \leq n-1$, $a_{i}a_{j}\in E(G)$ if and only if $a_i\in S_{jX_{j}}$ and $a_j\in S_{iX_i}$. Note that $a_ia_j$ forms an edge with probability $\frac{1}{r^2}$.

Let $G_0,G_1,\ldots,G_{n-1}$ be i.i.d. copies of $G$ on the same vertex set $\{a_0,a_1,\ldots,a_{n-1}\}$. Define a $(2,1)$-type $3$-graph $H$ with vertex partition $A\cup B$ as follows: $A=\{a_0,a_1,\ldots,a_{n-1}\}$, $B=\{b_0,b_1,\ldots,b_{n-1}\}$, and $a_ia_jb_{\ell}\in E(H)$ if and only if $a_ia_j\in E(G_{\ell})$. In other words, $G_{\ell}$ is the link graph of $b_{\ell}$ for any $0\leq\ell\leq n-1$.

Finishing our proof, we next show that (C1) and (C2) are satisfied with high probability and  (C3) is guaranteed by a geometric property of $H$.  
\bigskip

\noindent{\bf Verifying (C1)} 
Note that for any $a_i,a_j\in A$, $d_H(\{a_i,a_j\})$ is exactly the number of $G_{\ell}$ containing $a_ia_j$ as an edge. Since $\mathbb{P}\left(a_ia_j\in G\right)=\frac{1}{r^2}$ and $G_{\ell}$ is an i.i.d. copy of $G$, it follows that 
$d_H(\{a_i,a_j\})\sim {\rm Bin}(n,\frac{1}{r^2}).$
It follows from Chernoff's inequality that $\mathbb{P}\left(d_H(\{a_i,a_j\})\leq \frac{n}{2r^2}\right)\leq e^{-\frac{n}{8r^2}}.$ Therefore, taking a union bound over $\binom{n}{2}$ such pairs, the probability that $d_H(\{a_i,a_j\})> \frac{n}{2r^2}$ for every pair $\{a_i,a_j\}$ is at least $1-\binom{n}{2}e^{-\frac{n}{8r^2}}$.
\bigskip

\noindent{\bf Verifying (C2).} 
Let $a_i\in A$, $b_{\ell}\in B$. Since $d_H\left(\{a_i,b_{\ell}\}\right)=d_{G_{\ell}}(a_i)$ and $G_{\ell}$ is an i.i.d. copy of $G$, $d_H\left(\{a_i,b_{\ell}\}\right)$ and $d_G(a_i)$ are identically distributed. Conditioning on the choice of $X_i$, for each $a_j\in S_{iX_i}$, the probability that $a_ia_j\in E(G)$ is exactly $\frac{1}{r}$ and for each $a_j\notin S_{iX_i}$, the probability that $a_ia_j\in E(G)$ is zero. Since the choice of each $X_j$ is independent, $d_G(a_i)\sim {\rm Bin}(q,\frac{1}{r})$.  
Thus, $\mathbb{E}\left(d_H\left(\{a_i,b_{\ell}\}\right)\right)=\frac{n}{r^2}$. It follows from Chernoff's inequality that $\mathbb{P}\left(d_H\left(\{a_i,b_{\ell}\}\right)\leq \frac{n}{2r^2}\right)\leq e^{-\frac{n}{8r^2}}.$  Therefore, taking a union bound over $n^2$ such pairs, the probability that $d_H\left(\{a_i,b_{\ell}\}\right)> \frac{n}{2r^2}$ for every $a_i$ and $b_{\ell}$ is at least $1-n^2e^{-\frac{n}{8r^2}}$.
\bigskip

\noindent{\bf Verifying (C3).} 
Let $F$ be a $3$-graph with $|V(F)|\le k$ and $\pi_{\points}(F)>0$, and suppose to the contrary that $H$ contains a copy of $F$. Inheriting from the structure of $H$, $F$ must be a $(2,1)$-type $3$-graph. By Lemma \ref{connected}, for some $A'\subset A$ and $B'\subset B$, the induced subhypergraph $F':=F[A'\cup B']$ of $F$ satisfies that $\pi_{\points}(F')>0$ and $\Gamma_{F'}$ is a subgraph of some connected component of $\Gamma_F$. Let us imagine that the elements of $A$ are distributed on the unit circle of the complex plane with $a_{\ell}$ being the point $e^{2\pi \ell i/n}$. The following claim shows a `clustering' property of $F'$.

\begin{claim}[Clustering]\label{cl:cluster}
 For every $b\in B$ and $a_j\in A$, all the neighbors of $a_j$ in $L_{H}(b)$ lie in an arc of length at most $\frac{2\pi}{r}$ not containing $a_j$. Consequently, all the vertices of $A'$ lie on an open half circle.  
\end{claim}
\begin{proof}
 The first part of the claim follows from the constructions of the random graph $G$ and the $3$-graph $H$. Indeed, all the neighbors of $a_j$ in $L_{H}(b)$ concentrate on the random arc $S_{jX_j}$ of length $\frac{q-1}{qr+1}\cdot 2\pi\le \frac{2\pi}{r}$, and note that the arc $S_{jX_j}$ does not contain $a_j$. 

For the second part, by the definition of $\Gamma_F$ and the assumption that $F\subseteq H$, each edge $xy$ in any component of $\Gamma_F$ implies that there is a path $xay$ in $L_{H}(b)$ for some $a\in A$ and $b\in B$. So by the first part, the distance of $x,y$ on the unit circle is at most $\frac{2\pi}{r}$. Since $\Gamma_{F'}$ is a subgraph of some connected component of $\Gamma_F$ and $|A'|\leq |V(F)|\leq \frac{r}{4}$, we conclude that every pair of vertices in $A'$ has a distance less than $\pi$ on the unit circle, which implies that all the elements of $A'$ lie on an open half circle.  
\end{proof}

Let $\mathcal{C}$ be an open half circle containing all the vertices of $A'$. Along the counter-clockwise direction of $\mathcal{C}$, we obtain an enumeration $x_1,x_2,\ldots,x_{|A'|}$ of all the vertices in $A'$. Let $\sigma$ be the labeling of $A'$ defined by setting $\sigma(x_j)=j$ for all $1\leq j\leq |A'|$. Recall that $\pi_{\points}(F')>0$. By Lemma \ref{strengthen}, for some $b\in B'$, there is a monotone path $x_{\ell}x_{s}x_{t}$ in $L_{F'}(b)$ with $\ell<s<t$. By the first part of~\cref{cl:cluster}, $x_{\ell}$ and $x_t$ lie in some arc $\mathcal{R}$ of length at most $2\pi/r$ not containing $x_s$. Since $x_s\not\in\mathcal{R}$, $\mathcal{R}$ has to contain the complement of $\mathcal{C}$, which has length more than $\pi$, a contradiction.
\end{proof}

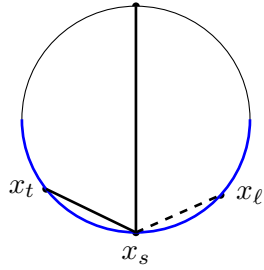
\begin{figure}[!ht]
\centering
\begin{tikzpicture}[scale=0.3]
\node[ inner sep= 1.0pt,black](u) at (0,5) [circle,fill]{};
\node[inner sep= 1.0pt,black](u) at (0,-5)[circle,fill]{};
\node[inner sep= 1.3pt,black](u') at (0,-6)[]{$x_s$};
\node[ inner sep= 1.0pt,black](u) at (-3.940,-3.078) [circle,fill]{};
\node[inner sep= 1.3pt,black](u') at (-5.0,-3.078)[]{$x_{t}$};
\node[inner sep= 1.0pt,black](u) at (3.716,-3.346)[circle,fill]{};
\node[inner sep= 1.3pt,black](u') at (5,-3.346)[]{$x_{\ell}$};



  \draw[fill=white,fill opacity=0.05](0,0) circle(5);
  \draw[blue,line width=1] (5,0) arc (0:-180:5);
  \draw [black,line width=1](0 ,-5)-- (0,5);  
  \draw [black,dashed,line width=1](0 ,-5)-- (3.716,-3.346);
 \draw [black,line width=1](0,-5)-- (-3.940,-3.078);
 
  
 \end{tikzpicture}
\caption{An illustration of the geometric property of circles that if $x_t$, $x_s$ and $x_{\ell}$ lie on a (blue) half circle, then $x_t$ and $x_{\ell}$ lie on the distinct half circles determined by $x_s$ and its opposite point. Therefore, the construction of $G$ implies that $x_{\ell}$ and  $x_t$ can not be jointed to $x_s$ simultaneously. }
\end{figure}

A remark here is that requiring that $n=qr+1$ is just for our convenience to present the proof, our arguments actually hold for all sufficiently large integers $n$ with a tiny modification. Combining all, we now give the proof of our main theorem.

\begin{proof}[{\bf Proof of Theorem \ref{main}}] 
Let $F$ be a $k$-vertex 3-graph with $\pi_{\points}(F)>0$. To prove Theorem \ref{main}, it suffices to show $\pi_{\mathrm{co}}(F)>0$. Let $\varepsilon_k$ be the constant from \cref{good}, and let $\hat{H}$ be a $2n$-vertex $(2,1)$-type 3-graph satisfying all (C1), (C2) and (C3). 

Let $H$ be a $3n$-vertex 3-graph which admits a good vertex partition $V(H)=A \cup B\cup C$ with $|A|=|B|=|C|=n$ such that $H[A\cup B], H[B\cup C]$ and $H[C\cup A]$ are all isomorphic to $\hat{H}$.  
By (C1) and (C2), $\delta_{\co}(H)\ge \varepsilon_k n$. It remains to show that $F$ is not a subhypergraph of $H$.
Suppose to the contrary that $H$ contains $F$. Inheriting from the structure of $H$, there is a good partition $A'\cup B'\cup C'$ of $V(F)$ with $A'\subseteq A, B'\subseteq B$ and $C'\subseteq C$.
By \cref{3parts}, without loss of generality, we may assume that $\pi_{\points}(F[A'\cup B'])>0$. Since $|A'\cup B'|\leq |V(F)|=k$, by \cref{good}, $F[A'\cup B']$ can not be a subhypergraph of $H[A\cup B]$, a contradiction. 
\end{proof}

\medskip
\noindent{\bf Acknowledgements.~}The authors would like to thank Suyun Jiang, Ruonan Li, Guanghui Wang and Zhuo Wu for helpful discussions.


\bibliographystyle{abbrv}
\bibliography{references.bib}

\begin{thebibliography}{10}

\bibitem{cycledan}
M.~Buci{\'c}, J.~W. Cooper, D.~Kr{\'a}l', S.~Mohr, and D.~M. Correia.
\newblock Uniform {Tur{\'a}n} density of cycles.
\newblock {\em Transactions of the American Mathematical Society}, 376(7):4765--4809, 2023.

\bibitem{K43nagle}
A.~Czygrinow and B.~Nagle.
\newblock A note on codegree problems for hypergraphs.
\newblock {\em Bulletin of the Institute of Combinatorics and its Applications}, 32:63--69, 2001.

\bibitem{erdosconstruction}
P.~Erd\H{o}s and A.~Hajnal.
\newblock On {Ramsey} like theorems. problems and results, {Combinatorics (Proc. Conf. Combinatorial Math., Math. Inst., Oxford, 1972) Inst. Math. Appl., Southend-on-Sea,} 1972, pp. 123–140.

\bibitem{erdossos}
P.~Erd\H{o}s and V.~T. S{\'o}s.
\newblock On {Ramsey–Tur{\'a}n} type theorems for hypergraphs.
\newblock {\em Combinatorica}, 2(3):289–295, 1982.

\bibitem{loanswer}
V.~Falgas-Ravry and A.~Lo.
\newblock Subgraphs with large minimum $\ell$-degree in hypergraphs where almost all $\ell$-degrees are large.
\newblock {\em Electronic Journal of Combinatorics}, 25(2):Paper 18, 1--7, 2018.

\bibitem{falgas2015codegree}
V.~Falgas-Ravry, E.~Marchant, O.~Pikhurko, and E.~R. Vaughan.
\newblock The codegree threshold for 3-graphs with independent neighborhoods.
\newblock {\em SIAM Journal on Discrete Mathematics}, 29(3):1504--1539, 2015.

\bibitem{K43-codegree}
V.~Falgas-Ravry, O.~Pikhurko, E.~Vaughan, and J.~Volec.
\newblock The codegree threshold of {$K_4^-$}.
\newblock {\em Journal of the London Mathematical Society (2)}, 107:1660--1691, 2023.

\bibitem{1/27dan}
F.~Garbe, D.~Kr{\'a}l', and A.~Lamaison.
\newblock Hypergraphs with minimum positive uniform {Tur{\'a}n} density.
\newblock {\em Israel Journal of Mathematics}, doi:10.1007/s11856-023-2554-0, 2023.

\bibitem{K43-dan}
R.~Glebov, D.~Kr{\'a}l', and J.~Volec.
\newblock A problem of {Erd\H{o}s and S{\'o}s} on 3-graphs.
\newblock {\em Israel Journal of Mathematics}, 211(1):349--366, 2016.

\bibitem{zhou1}
H.~Li, H.~Lin, G.~Wang, and W.~Zhou.
\newblock Hypergraphs with a quarter uniform {Tur{\'a}n} density.
\newblock {\em arXiv:2305.11749}, 2023.

\bibitem{zhou2}
H.~Lin, G.~Wang, and W.~Zhou.
\newblock The minimum positive uniform {Tur{\'a}n} density in uniformly dense $k$-uniform hypergraphs.
\newblock {\em arXiv:2305.01305}, 2023.

\bibitem{mubayi2005co}
D.~Mubayi.
\newblock The co-degree density of the fano plane.
\newblock {\em Journal of Combinatorial Theory, Series B}, 95(2):333--337, 2005.

\bibitem{K43-nagle}
B.~Nagle.
\newblock {Tur{\'a}n}-related problems for hypergraphs.
\newblock {\em Congressus Numerantium}, page 119–128, 1999.

\bibitem{piga2022codegree}
S.~Piga, M.~Sales, and B.~Sch{\"u}lke.
\newblock The codegree {Tur{\'a}n} density of tight cycles minus one edge.
\newblock {\em Combinatorics, Probability and Computing}, 32:881--884, 2023.

\bibitem{piga2023hypergraphs}
S.~Piga and B.~Sch{\"u}lke.
\newblock Hypergraphs with arbitrarily small codegree {Tur{\'a}n} density.
\newblock {\em arXiv:2307.02876}, 2023.

\bibitem{reihersurvey}
C.~Reiher.
\newblock Extremal problems in uniformly dense hypergraphs.
\newblock {\em European Journal of Combinatorics}, 88:103117, 2020.

\bibitem{K43dotedge}
C.~Reiher, V.~R{\"o}dl, and M.~Schacht.
\newblock Embedding tetrahedra into quasirandom hypergraphs.
\newblock {\em Journal of Combinatorial Theory, Series B}, 121:229--247, 2016.

\bibitem{reiher2018hypergraphs}
C.~Reiher, V.~R{\"o}dl, and M.~Schacht.
\newblock Hypergraphs with vanishing {Tur{\'a}n} density in uniformly dense hypergraphs.
\newblock {\em Journal of the London Mathematical Society}, 97(1):77--97, 2018.

\bibitem{manteluniform}
C.~Reiher, V.~R{\"o}dl, and M.~Schacht.
\newblock On a generalisation of {Mantel’s} theorem to uniformly dense hypergraphs.
\newblock {\em International Mathematics Research Notices}, 16:4899--4941, 2018.

\bibitem{K43-rodl}
C.~Reiher, V.~R{\"o}dl, and M.~Schacht.
\newblock On a {Tur{\'a}n} problem in weakly quasirandom 3-uniform hypergraphs.
\newblock {\em Journal of the European Mathematical Society}, 20(5):1139--1159, 2018.

\bibitem{K43rodl}
V.~R{\"o}dl.
\newblock On universality of graphs with uniformly distributed edges.
\newblock {\em Discrete Mathematics}, 59(1-2):125--134, 1986.

\end{thebibliography}
\end{document}